\numberwithin{equation}{section}
\theoremstyle{plain} 
\theoremstyle{definition} 
\newtheorem{thm}{Theorem}[section]
\newtheorem{cor}[thm]{Corollary}
\newtheorem{lem}[thm]{Lemma}
\theoremstyle{definition}
\newtheorem{defn}{Definition}[section]
\theoremstyle{remark}
\newtheorem{rem}{Remark}[section]
\newcommand{\be}{\begin{equation}}
	\newcommand{\ee}{\end{equation}}
\newcommand{\bea}{\begin{eqnarray}}
	\newcommand{\eea}{\end{eqnarray}}
\newcommand{\ben}{\begin{eqnarray*}}
	\newcommand{\een}{\end{eqnarray*}}
\newcommand{\bt}{\begin{split}}
	\newcommand{\et}{\end{split}}
\newcommand{\bet}{\begin{equation}}
	\newcommand{\mc}{\mathbb{C}}
	\newcommand{\mr}{\mathbb{R}}
	\newcommand{\ra}{\rightarrow}
	\newcommand{\beq}{\begin{equation*}}
		\newcommand{\eeq}{\end{equation*}}
	\newcommand{\bi}{\begin{itemize}}
		\newcommand{\ei}{\end{itemize}}
\newcommand{\mo}{\mathcal{O}}
\newcommand{\rw}{\rightarrow}
\newcommand{\rwo}{\mapsto}
\newcommand{\dbar}{\bar{\partial}}
  \newcommand{\im}{\operatorname{Im}}
    \newcommand{\re}{\operatorname{Re}}
\begin{document}
		
\title[A bridge connecting convex analysis and complex analysis]
{A bridge connecting convex analysis and complex analysis and $L^2$-estimate of $d$ and $\bar\partial$}

		\author[F. Deng]{Fusheng Deng}
		\address{Fusheng Deng: \ School of Mathematical Sciences, University of Chinese Academy of Sciences, Beijing 100049, P. R. China}
		\email{fshdeng@ucas.ac.cn}
		
		\author[J. Hu]{Jinjin Hu}
		\address{Jinjin Hu: \ Department of Mathematics and Yau Mathematical Sciences
Center, Tsinghua University, Beijing 100084, P. R. China}
		\email{hujinjin@mail.tsinghua.edu.cn}

        \author[W. Jiang]{Weiwen Jiang}
        \address{Weiwen Jiang: \ Institute of Mathematics, Academy of Mathematics and Systems Science, Chinese Academy of Sciences, Beijing 100190, P. R. China}
        \email{jiangweiwen@amss.ac.cn}
		
		\author[X. Qin]{Xiangsen Qin}
		\address{Xiangsen Qin: \ School of Mathematical Sciences, University of Chinese Academy of Sciences, Beijing 100049, P. R. China}
		\email{qinxiangsen19@mails.ucas.ac.cn}

\begin{abstract}
  We propose a way to connect complex analysis and convex analysis. As applications,
  we derive some results about $L^2$-estimate for $d$-equation and prove some curvature positivity related to convex analysis from well known $L^2$-estimate for $\bar\partial$-equation
        or the results we prove in complex analysis.
 \end{abstract}

		\maketitle
		
\tableofcontents
\section{Introduction}
It is well known that complex analysis and convex analysis are related or analogous in some aspects.
For example, a function $\varphi(z)$ on $\mc^n$ that is independent of the imaginary part of $z$ is plurisubharmonic
if and only if it is convex when viewed as a function on $\mr^n$, and a tube domain of the form $D + i \mr^n\subset\mc^n$
is pseudoconvex if and only if its base $D$ is a convex domain in $\mr^n$.
In the present paper, we propose another principle that connecting complex analysis and  convex analysis in a certain way.
As applications, we show that some important (both old and new) results in convex analysis can be deduced from  analogous results in complex analysis.

The principle can be described as follows.
Starting from a domain (connected open set) $D\subset\mr^n$, one can form a tube domain $T_D:=D+i \mr^n$ in $\mc^n$.
Now the domain $T_D$ admits the obvious symmetry $\mr^n$ given by translations along the imaginary part.
But the trouble is that it is difficult to make use of the symmetry since $\mr^n$ is noncompact as a Lie group.
        So we take a step forward by considering the domain
        $$R_D:=\exp T_D=\{(e^{z_1},\cdots,e^{z_n})|\ (z_1,\cdots,z_n)\in T_D\},$$
        which is a Reinhardt domain in $\mc^n$ that has no intersection with the coordinate axes.
        Now the key point is that translation symmetry of $T_D$ induces rotation symmetry of $R_D$ given by the compact Lie group $\mathbb{T}^n$, the torus group of dimension $n$. Through this way, we get a one to one correspondence between domains in $\mr^n$ and Reinhardt domains in $(\mc^*)^n$.
        It follows that, in principle, problems about $D$ can be translated to analogous rotation-invariant problems about $R_D$, and vice versa.

        This principle has been implicitly used in \cite{DHJ20} and  \cite{DZZ17}  for the study
        of positivity of direct image sheaves, and has applied in \cite{DHQ}, \cite{DJQ} deducing
        some important results about real analysis from analogous results from complex analysis.

        The above principle has a lot of other applications.
        In this paper, we give some typical examples to show the power of it.

        The first example is the following

         \begin{thm}\label{thm:Ber}
         Let $D\subset \mr_x^n$ be a convex domain, $V\subset \mc_\tau^m$ be a pseudoconvex domain,
         and let $W_z:=D+i \mr_y^n\subset \mc_z^n.$ Let $\varphi(\tau,z),\ \psi(\tau, z)$ be plurisubharmonic functions on $V\times W,$ which are independent of $\im(z)$
         (the imaginary part of $z$) for any $z\in W$. We assume that $\psi$ is $C^2$ and is strictly plurisubharmonic with respect to $\tau$.
         Then for any $\bar{\partial}$-closed measurable $(0,1)$-form $f:=\sum^n_{j=1}f_jd\bar{\tau}_j$ in $V$, we can solve the equation $\bar{\partial}u=f$ in $V$ with the estimate
         $$
         \int_{V\times D}|u|^2e^{-\varphi-\psi}d\lambda\leq \int_{V\times D}\sum^n_{j,k=1}\psi^{j\bar k}f_j\bar f_ke^{-\varphi-\psi}d\lambda,
         $$
         provided the right hand side is finite, where $\left(\psi^{j\bar k}\right)_{n\times n}:=\left(\frac{\partial^2\psi}{\partial \tau_j\partial \bar{\tau}_k}\right)^{-1}_{n\times n}$.
         \end{thm}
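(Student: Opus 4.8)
\emph{Overview of the plan.} I would transport the whole statement through the ``bridge'' of the introduction, solve the resulting $\bar\partial$-equation on the associated Reinhardt domain by a known weighted $L^2$-estimate for $\bar\partial$, and descend the solution back to $V$.

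\emph{Step 1: passing to the Reinhardt side.} With $W_z=D+i\mr^n_y$, set $R_D:=\exp(W_z)=\{w\in(\mc^*)^n:(\log|w_1|,\dots,\log|w_n|)\in D\}$, a Reinhardt domain whose logarithmic image is the convex set $D$; hence $R_D$, and therefore $V\times R_D$, is pseudoconvex. Since $\varphi,\psi$ are independent of $\im z$ they descend to the rotation-invariant functions $\tilde\varphi(\tau,w):=\varphi(\tau,\log|w_1|,\dots,\log|w_n|)$ and $\tilde\psi(\tau,w):=\psi(\tau,\log|w_1|,\dots,\log|w_n|)$, which are plurisubharmonic on $V\times R_D$ (pull-backs of p.s.h.\ functions), with $\tilde\psi$ still $C^2$ and strictly p.s.h.\ in $\tau$, and $\partial^2\tilde\psi/\partial\tau_j\partial\bar\tau_k=\partial^2\psi/\partial\tau_j\partial\bar\tau_k$ (differentiating in $\tau$ freezes the $w$-dependence). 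Put $\rho(\tau,w):=\tilde\varphi(\tau,w)+\tilde\psi(\tau,w)+2\sum_{k=1}^n\log|w_k|$, which is p.s.h.\ because $\sum_k\log|w_k|$ is pluriharmonic on $(\mc^*)^n$. The substitution $x_k=\log|w_k|$ gives, for any rotation-invariant $G$,
\[
\int_{R_D} G(\tau,\log|w|)\,e^{-\rho(\tau,w)}\,d\lambda(w)=(2\pi)^n\int_D G(\tau,x)\,e^{-\varphi(\tau,x)-\psi(\tau,x)}\,dx,
\]
so the two integrals over $V\times D$ in Theorem~\ref{thm:Ber} equal, up to the constant $(2\pi)^{-n}$, the analogous integrals over $V\times R_D$ against $e^{-\rho}$, with $\sum\psi^{j\bar k}f_j\bar f_k$ replaced by $\sum\tilde\psi^{j\bar k}f_j\bar f_k$, where $(\tilde\psi^{j\bar k})$ inverts the $\tau$-block $(\partial^2\tilde\psi/\partial\tau_j\partial\bar\tau_k)$.

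\emph{Step 2: reduction to $V\times R_D$ and descent.} Viewed on $V\times R_D$, $\tilde f:=\sum_j f_j(\tau)d\bar\tau_j$ is $\bar\partial$-closed and depends only on $\tau$. If $U$ solves $\bar\partial U=\tilde f$ and is $\mathbb{T}^n$-invariant, then comparing $d\bar w$-components gives $\bar\partial_w U=0$, so $U$ is holomorphic in $w$; a $\mathbb{T}^n$-invariant holomorphic function on a Reinhardt domain in $(\mc^*)^n$ is independent of $w$, so $U=u(\tau)$ with $\bar\partial u=f$ on $V$. Averaging any solution over the compact group $\mathbb{T}^n$ gives a $\mathbb{T}^n$-invariant solution with no larger $L^2(e^{-\rho})$-norm (the weight $e^{-\rho}$ and Lebesgue measure are $\mathbb{T}^n$-invariant, $\bar\partial$ commutes with the action, and $\tilde f$ is invariant). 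Thus, by Step~1, Theorem~\ref{thm:Ber} reduces to: on the pseudoconvex domain $V\times R_D$, $\bar\partial U=\tilde f$ has a solution with
\[
\int_{V\times R_D}|U|^2e^{-\rho}\,d\lambda\ \le\ \int_{V\times R_D}\sum_{j,k}\tilde\psi^{j\bar k}f_j\bar f_k\,e^{-\rho}\,d\lambda .
\]

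\emph{Step 3 and the main obstacle.} It remains to produce this solution via a weighted $L^2$-estimate for $\bar\partial$ on $V\times R_D$: the relevant data are the p.s.h.\ weight $e^{-\rho}$, the p.s.h.\ function $\rho-\tilde\psi=\tilde\varphi+2\sum_k\log|w_k|$, and the $C^2$ function $\tilde\psi$, strictly p.s.h.\ in $\tau$, and $\tilde f$ lives only in the $\tau$-directions. This is the heart of the argument and the step I expect to be hardest, because the plain Hörmander estimate is \emph{not} enough: with weight $e^{-\rho}$ it would give the bound $\int\langle(i\partial\bar\partial\rho)^{-1}\tilde f,\tilde f\rangle e^{-\rho}$, which, since $\tilde f$ has only $d\bar\tau$-components, equals $\int\langle(\operatorname{Schur}_\tau i\partial\bar\partial\rho)^{-1}\tilde f,\tilde f\rangle e^{-\rho}$, and the $\tau$-Schur complement of $i\partial\bar\partial\rho$ can be strictly smaller than $(\partial^2\tilde\psi/\partial\tau\partial\bar\tau)$ (already when $\varphi\equiv0$, owing to the mixed derivatives $\partial^2\tilde\psi/\partial\tau_j\partial\bar w_k$), so that inverse is too large. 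The improvement to the $\tau$-Hessian of $\tilde\psi$ alone must use that $\tilde\varphi$ and the pluriharmonic term enter only through the volume form, together with the rotation invariance; concretely I would either (i) pass to the fibre integral $\Psi(\tau)=-\log\int_{R_D}e^{-\rho(\tau,\cdot)}d\lambda=-\log\big((2\pi)^n\int_D e^{-\varphi-\psi}dx\big)$ and apply a quantitative, Berndtsson-type curvature lower bound for $\Psi$ (a sharpened form of the subharmonicity of relative fibre integrals), or (ii) run a twisted ($\grave a$ la Donnelly–Fefferman–Berndtsson), or dimension-augmented (auxiliary $\mc^m$-variables whose Gaussian integral reproduces $\det(\partial^2\psi/\partial\tau\partial\bar\tau)^{-1}$), $L^2$-estimate directly on $V\times R_D$. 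Either way, once the estimate with constant $1$ and curvature term $\sum\tilde\psi^{j\bar k}f_j\bar f_k$ is in hand, Steps~2 and~1 translate the solution back to the estimate asserted in Theorem~\ref{thm:Ber}.
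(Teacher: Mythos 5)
Your Steps 1 and 2 reproduce the paper's argument exactly: the map $(\tau,z)\mapsto(\tau,e^{z_1},\dots,e^{z_n})$ onto the pseudoconvex domain $\Omega'=V\times R_D$, the absorption of the Jacobian into the plurisubharmonic weight $\varphi''=\varphi'+\sum_j\log|w_j|^2$, and the descent: a $\mathbb{T}^n$-invariant solution (the paper takes the minimal one and uses uniqueness, you average over the group --- both work) is holomorphic and rotation-invariant in $w$, hence independent of $w$, and the integrals match up to the factor $(2\pi)^n$. This part is correct and is the same bridge the paper builds.

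The genuine gap is Step 3, which you explicitly decline to carry out, and which is the entire analytic content of the theorem. The paper closes this step in one line by invoking Lemma \ref{thm:Berndtsson L^2 estimate} (Berndtsson's Lemma 1.6.4 in \cite{Ber95}): on $\Omega'$ one solves $\bar\partial u=f$ for the weight $e^{-\varphi''-\psi'}$, with the curvature denominator supplied by $\psi'$ alone and the plurisubharmonic part $\varphi''$ discarded. Your diagnosis of why the bare H\"ormander bound with the single weight $e^{-\rho}$ is insufficient --- the dual norm of a form supported in the $d\bar\tau$-directions against the full Hessian is governed by the inverse of the Schur complement rather than of the $\tau$-block --- is a fair observation, and it is in fact a point the paper's citation passes over silently, since $\psi'$ is strictly plurisubharmonic only in $\tau$ and the mixed derivatives $\partial^2\psi'/\partial\tau_j\partial\bar w_p$ need not vanish. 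But neither of your two proposed repairs is executed, and neither is the paper's route: alternative (i) is essentially circular, because the curvature lower bound for $\Psi(\tau)=-\log\int e^{-\rho}$ is normally \emph{deduced from} exactly this kind of $\bar\partial$-estimate (that is how Berndtsson obtains Pr\'ekopa from the proposition you are trying to prove); alternative (ii) names a family of techniques without specifying the twist or the auxiliary weight, so there is nothing to check. As it stands your proposal reduces Theorem \ref{thm:Ber} to an unproved estimate on $V\times R_D$; to finish it along the paper's lines you must quote (and, if you share your own scepticism, verify the applicability of) Lemma \ref{thm:Berndtsson L^2 estimate}.
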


        Theorem \ref{thm:Ber} is a slight generalization of  \cite[Proposition 2.3]{Ber98}, which plays an essential role in Berndtsson's proof of a Pr\'ekopa-type inequality for plurisubharmonic functions on tube domains.  Berndtsson's original proof involves taking limit of balls in $\mr^n$ and an infinite dimensional version of the Arzel\`a-Ascoli Theorem.
        Our proof of Theorem \ref{thm:Ber}, based on the principle mentioned above, will be very simple and different. Following Berndtsson's method, Inayama proved an analogue of Theorem \ref{thm:Ber} for $L^2$-extension of holomorphic functions.
        \begin{thm}[{\cite[Theorem 1.1]{I21}}]\label{thm:optimal}
          For any $r>0,$ let $\Delta:=\{\tau\in \mc|\ |\tau|<r\}.$ Let $D\subset \mr_x^n$ be a bounded convex domain,
          $\varphi(\tau,z)$ be a plurisubharmonic function on $\Delta_\tau\times (D_x+i \mr^n_y),$ which is independent of $\im(z)$.
          Then there exists a holomorphic function $f$ on $\Delta$ satisfying $f(0)=1$ and
        $$
        \int_{\Delta\times D}|f(\tau)|^2e^{-\varphi(\tau,x)}d\lambda(\tau,x)\leq \pi r^2\int_D e^{-\varphi(0,x)}d\lambda(x),
        $$
        provided the right hand side is finite.
        \end{thm}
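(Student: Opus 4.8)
The plan is to run the ``bridge'' of the introduction exactly as in the proof of Theorem~\ref{thm:Ber}: transport the one‑variable extension problem over the tube domain $\Delta\times(D+i\mr^n)$ to a \emph{rotation‑invariant} extension problem over the bounded Reinhardt domain
\[
\Delta\times R_D,\qquad R_D=\exp(D+i\mr^n)=\{w\in(\mc^*)^n:(\log|w_1|,\dots,\log|w_n|)\in D\},
\]
solve the latter by the optimal‑constant $L^2$‑extension theorem of Ohsawa--Takegoshi type, and push the solution back down. Since $D$ is bounded and convex, $R_D$ is a bounded pseudoconvex Reinhardt domain with $\overline{R_D}\subset(\mc^*)^n$, so $\Delta\times R_D$ is a bounded pseudoconvex domain in $\mc^{1+n}$. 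Under a local branch of the biholomorphism $z_j=\log w_j$ (so that $x_j=\re z_j=\log|w_j|$), a function $\varphi(\tau,z)$ that is psh on $\Delta\times(D+i\mr^n)$ and independent of $\im z$ pulls back to the psh, rotation‑invariant function $(\tau,w)\mapsto\varphi(\tau,\log|w_1|,\dots,\log|w_n|)$ on $\Delta\times R_D$; the pullback is single‑valued precisely because $\varphi$ does not see $\im z$.

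The only bookkeeping point is the change‑of‑variables Jacobian: writing $w_j=\rho_je^{i\theta_j}$ one has $d\lambda(w)=e^{2\sum_j x_j}\,d\lambda(x)\,d\theta$ with $x_j=\log\rho_j$. To make this factor disappear I absorb it into the weight and set
\[
\Phi(\tau,w):=\varphi(\tau,\log|w_1|,\dots,\log|w_n|)+2\sum_{j=1}^{n}\log|w_j|,
\]
which is still psh on $\Delta\times R_D$, being the sum of a psh function and the pluriharmonic function $2\sum_j\log|w_j|=\log|w_1\cdots w_n|^2$, and still rotation‑invariant. Now I apply the sharp Ohsawa--Takegoshi extension theorem (Guan--Zhou) on $\Delta\times R_D\subset\{|\tau|<r\}\times\mc^n$ to extend the holomorphic datum $g\equiv1$ from the hyperplane slice $\{\tau=0\}\times R_D$ through the centre of the disk $\Delta$ of radius $r$: the sharp constant is $\pi r^2$, and one gets $G\in\mathcal{O}(\Delta\times R_D)$ with $G|_{\tau=0}\equiv1$ and
\[
\int_{\Delta\times R_D}|G|^2e^{-\Phi}\,d\lambda\ \leq\ \pi r^2\int_{R_D}e^{-\Phi(0,w)}\,d\lambda(w)\ =\ \pi r^2(2\pi)^n\int_{D}e^{-\varphi(0,x)}\,d\lambda(x),
\]
which is finite by hypothesis. (As $\varphi$, hence $\Phi$, is only psh, one applies this after the usual decreasing regularization of the weight.)

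It remains to force $G$ to depend on $\tau$ only. Averaging over the torus action, set $\widetilde G(\tau,w):=(2\pi)^{-n}\int_{[0,2\pi)^n}G(\tau,e^{i\theta_1}w_1,\dots,e^{i\theta_n}w_n)\,d\theta$. Since the rotations preserve $\Delta\times R_D$, $\widetilde G$ is again holomorphic; since $\Phi$ and Lebesgue measure are rotation‑invariant, Jensen's inequality for $|\cdot|^2$ together with Fubini shows $\widetilde G$ obeys the same estimate as $G$; and $\widetilde G|_{\tau=0}\equiv1$ because the datum is rotation‑invariant. Expanding $\widetilde G$ in its Laurent series in $w$ on the Reinhardt domain, rotation‑invariance kills every non‑constant monomial, so $\widetilde G(\tau,w)=f(\tau)$ with $f\in\mathcal{O}(\Delta)$ and $f(0)=1$. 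Finally I pass back to $(\tau,x,\theta)$‑coordinates in the estimate for $\widetilde G$: the factor $e^{-2\sum_j x_j}$ inside $e^{-\Phi}$ cancels the Jacobian $e^{2\sum_j x_j}$, the $\theta$‑integration produces $(2\pi)^n$ on both sides, and after cancelling it one obtains exactly
\[
\int_{\Delta\times D}|f(\tau)|^2e^{-\varphi(\tau,x)}\,d\lambda(\tau,x)\ \leq\ \pi r^2\int_{D}e^{-\varphi(0,x)}\,d\lambda(x).
\]

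All the hard analysis is imported through the sharp $L^2$‑extension theorem, so there is no serious obstacle; the two places that demand care are the choice of the shifted weight $\Phi$ (keeping plurisubharmonicity while eating the Jacobian) and the torus averaging, which is what collapses an $(n{+}1)$‑variable extension to the required one‑variable function $f(\tau)$ without spoiling holomorphy or the $L^2$ bound. Alternatively, one may first integrate $w$ out using the plurisubharmonic‑variation theorem, reducing directly to a one‑variable extension from a point; the averaging route above is the more transparent incarnation of the bridge.
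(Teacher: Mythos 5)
Your proposal is correct and follows essentially the same route as the paper's proof (given there in the more general form of Theorem~\ref{thm:general optimal2}): transport the problem to the Reinhardt domain $\Delta\times R_D$, absorb the Jacobian $\prod_{j}|w_j|^2$ into the plurisubharmonic weight (your $\Phi$ is exactly the paper's $\varphi''$), apply an optimal $L^2$-extension theorem across $\{\tau=0\}$, and descend back to $\Delta\times D$ with the $(2\pi)^n$ factors cancelling. The only minor divergence is how independence of $w$ is obtained --- you average over the torus action and use Jensen plus the Laurent expansion, while the paper takes the minimal extension and invokes uniqueness of the minimal solution under the rotation symmetry, and it uses B\l{}ocki's Green-function version of the extension theorem so as to cover a general bounded $\Delta\ni 0$, the disc of radius $r$ (constant $\pi r^2$) being the special case you treat via Guan--Zhou.
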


        We will show that, combing the optimal $L^2$-extension theorems for holomorphic functions in \cite{Blo13}, \cite{GZ15},
        Theorem \ref{thm:optimal} can be also deduced from the above principle directly.
        Of course, the disc $\Delta$ in Theorem \ref{thm:optimal} can be replaced by general bounded domains in $\mc^k$,
        with the form of the estimate modified accordingly. 
        But to clarify the main idea of the proof, we only consider the case $k=1$.

        We now turn to the discussion of $L^2$-estimate for the $d$-operator on convex domains.
        In complex analysis, the $L^2$-estimate of $\bar\partial$-equation on general pseudoconvex domains
        was established by H\"ormander in 1965 in his fundamental work \cite{Hor65}.
        H\"ormander's result was generalized to much more general context by Demailly and other authors in the early eighties (see e.g. \cite{Dem}).
        Surprisingly, one had not seen analogous work about the $L^2$-estimate for the $d$-operator
        until a decade later when Brascamp and Lieb proved a parallel result in \cite{BL76} on $\mr^n$.
        The $L^2$-estimate for the $d$-operator on general convex domains was proved only recently in \cite{JLY14} for line bundles.
        For positively curved vector bundles on the whole $\mr^n$,
        the $L^2$-estimate for the $d$-operator parallel to Brascamp-Lieb's result is given in \cite{Cor19} in 2019.
        The main motivation to \cite{BL76}, \cite{Cor19} is to prove the Brunn-Minkowski type result in convex analysis for Riemannian vector bundles with positive curvature.
        As indicated in H\"ormander's work, generalizing an estimate for the $d$-operator from the whole $\mr^n$ to general convex domains
        may require a highly nontrivial effort to prove some integral inequalities for certain forms that are not smooth.
        In the present paper, we generalize the result in \cite{Cor19} from $\mr^n$ to general convex domains very easily
        by applying well known results about the $L^2$-estimate of $\bar\partial$ and the above proposed principle.

        \begin{thm}\label{thm:Cor}
        Let $D\subset \mr^n$ be a convex domain, and let $(E,g)$ be a Nakano positive Riemannian vector bundle of finite rank $r$ over $D$, then for any $d$-closed $f\in L^2(D,\Lambda^1 T^*D\otimes E),$
            there exists $u\in L^{2}(D,E)$ with $du=f$ and satisfying the estimate
            $$
            \int_{D}|u|_g^2d\lambda\leq\int_{D}\langle(\Theta^{(E,g)})^{-1}f,f\rangle_{g}d\lambda.
            $$
        \end{thm}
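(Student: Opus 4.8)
The plan is to transplant the problem onto the pseudoconvex Reinhardt domain $R_D=\exp T_D\subset(\mc^*)^n$ attached to $D$, where it becomes a rotation-invariant $\bar\partial$-problem to which the H\"ormander--Demailly $L^2$-estimate for Nakano positive vector bundles applies; this is the same mechanism used in the proof of Theorem \ref{thm:Ber}. First I would reduce to the trivial bundle: since $D$ is convex, hence contractible, we may take $E=D\times\mr^r$ with the flat connection, so that $d$ is the ordinary componentwise exterior derivative and all the geometric content is carried by the metric $g=g(x)$, a smooth field of positive definite symmetric $r\times r$ matrices, with $\Theta^{(E,g)}$ the associated Riemannian curvature in the sense of \cite{Cor19}. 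Put $T_D=D+i\mr^n$ and $R_D=\exp T_D$; since $D$ is convex, $R_D$ is a pseudoconvex Reinhardt domain in $(\mc^*)^n$. Writing $w_j=e^{z_j}$ so that $x_j=\log|w_j|$, I pull $(E,g)$ back to the trivial holomorphic bundle $\tilde E=R_D\times\mc^r$ equipped with the $\mathbb{T}^n$-invariant Hermitian metric $\tilde g(w)=g(\log|w_1|,\dots,\log|w_n|)$ (the complex bilinear extension of $g$). Using $\partial/\partial z_j=\tfrac12\,\partial/\partial x_j$ and $\partial/\partial\bar z_k=\tfrac12\,\partial/\partial x_k$ on data depending only on $x$, a direct computation identifies the Chern curvature of $(\tilde E,\tilde g)$ with $\tfrac14$ times the pullback of $\Theta^{(E,g)}$; in particular $(\tilde E,\tilde g)$ is Nakano positive over $R_D$ if and only if $(E,g)$ is Nakano positive over $D$.

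Next I would translate the data. Given a $d$-closed $f=\sum_{j=1}^n f_j\,dx_j\in L^2(D,\Lambda^1T^*D\otimes E)$, set $\tilde f:=\tfrac12\sum_{j=1}^n f_j(\log|w|)\,d\bar w_j/\bar w_j$, a $\mathbb{T}^n$-invariant $(0,1)$-form with values in $\tilde E$; one checks $\bar\partial\tilde f=0$ exactly because $f$ is $d$-closed. To absorb the Jacobian of $w=e^z$ I replace $\tilde g$ by $h:=\big(\prod_j|w_j|^2\big)^{-1}\tilde g$; since $\log\prod_j|w_j|^2$ is pluriharmonic on $(\mc^*)^n$ we have $\Theta^{(\tilde E,h)}=\Theta^{(\tilde E,\tilde g)}$, so $(\tilde E,h)$ is still Nakano positive, while the change of variables $w=e^z$ gives
$$\int_{R_D}|u^\sharp|_h^2\,d\lambda=(2\pi)^n\int_D|u|_g^2\,d\lambda$$
for any $\mathbb{T}^n$-invariant section $u^\sharp$ of $\tilde E$ coming from a section $u$ of $E$ over $D$, and likewise
$$\int_{R_D}\big\langle(\Theta^{(\tilde E,h)})^{-1}\tilde f,\tilde f\big\rangle_h\,d\lambda=(2\pi)^n\int_D\big\langle(\Theta^{(E,g)})^{-1}f,f\big\rangle_g\,d\lambda,$$
the factors $\tfrac12$ (from $\bar\partial$ versus $d$) and $\tfrac14$ (from the two curvatures) cancelling in the latter identity; hence finiteness of the right-hand side of the asserted estimate is equivalent to finiteness of the corresponding integral over $R_D$.

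We may assume the right-hand side is finite. I would then apply the $L^2$-existence theorem for $\bar\partial$ on the pseudoconvex domain $R_D$ with the Nakano positive bundle $(\tilde E,h)$ (\cite{Hor65,Dem}) to obtain $\tilde u\in L^2(R_D,\tilde E)$ with $\bar\partial\tilde u=\tilde f$ and $\int_{R_D}|\tilde u|_h^2\,d\lambda\le\int_{R_D}\langle(\Theta^{(\tilde E,h)})^{-1}\tilde f,\tilde f\rangle_h\,d\lambda$. Averaging over the torus, $u^\sharp:=\int_{\mathbb{T}^n}\rho_\theta^*\tilde u\,d\theta$, where $\rho_\theta$ denotes the rotation action (which preserves $R_D$, $d\lambda$, $h$, and $\tilde f$), is $\mathbb{T}^n$-invariant, still satisfies $\bar\partial u^\sharp=\tilde f$, and obeys $\int_{R_D}|u^\sharp|_h^2\,d\lambda\le\int_{R_D}|\tilde u|_h^2\,d\lambda$ by Jensen's inequality. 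Since a $\mathbb{T}^n$-invariant section of $\tilde E$ is a function of $x=\log|w|$ alone, $u^\sharp$ corresponds to a section $u$ of $E\otimes_\mr\mc$ over $D$ for which $\bar\partial u^\sharp=\tilde f$ becomes $du=f$; replacing $u$ by its real part (which does not increase the $g$-norm) and combining the two displayed identities with the estimate on $R_D$ yields $\int_D|u|_g^2\,d\lambda\le\int_D\langle(\Theta^{(E,g)})^{-1}f,f\rangle_g\,d\lambda$, as desired.

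The step I expect to be the main obstacle is the geometric dictionary used above: one must set up the pullback $(\tilde E,\tilde g)$ carefully and verify, with the correct normalizing constants, that the Riemannian curvature $\Theta^{(E,g)}$ of \cite{Cor19} matches the Chern curvature of the Reinhardt pullback, so that both the Nakano positivity hypothesis and the quantity $\langle(\Theta)^{-1}f,f\rangle$ transfer faithfully to $R_D$. Granting this, the remainder is the now-standard rotation-invariance argument together with the classical $L^2$-estimate for $\bar\partial$.
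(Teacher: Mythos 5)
Your proposal is correct and follows essentially the same route as the paper: pass to the Reinhardt domain $R_D=\exp(D+i\mr^n)$, absorb the Jacobian $\prod_j|w_j|^{-2}$ into the Hermitian metric (a pluriharmonic weight, so the curvature and Nakano positivity are unchanged), apply the H\"ormander--Demailly $L^2$-estimate for the Nakano positive pullback bundle on this pseudoconvex (hence complete K\"ahler) domain, and descend a $\mathbb{T}^n$-invariant solution back to $D$. The only cosmetic differences are that you obtain invariance by averaging over $\mathbb{T}^n$ via Jensen's inequality while the paper uses uniqueness of the minimal solution, and your explicit tracking of the $1/2$ and $1/4$ normalization factors and of the real-part reduction is if anything more careful than the paper's ``by a simple computation.''
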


In the above theorem,
$$\Theta^{(E,g)}:=\left(\theta_{jk}^{(E,g)}\right)_{n\times n},\ \theta_{jk}^{(E,g)}:=-\frac{\partial}{\partial x^k}\left(g^{-1}\frac{\partial g}{\partial x^j}\right)$$
 is the curvature operator of $(E,g)$.
 We will give the definition of the involved Nakano positivity of $(E,g)$ later (see Definition \ref{defn:curvature operator}).

        Theorem \ref{thm:Cor} can be generalized to closed $E$-valued forms of higher degree via the same argument.
        We omit the exact formulation for the general context here.  The main idea in the proof of Theorem \ref{thm:Cor} is as follows.
        We transform from $D$ to $R_D$ and then solve the analogous $\bar\partial$-equation on $R_D$ with the $L^2$-estimate known in complex analysis.
        Since everything on $R_D$ that we are handling is invariant under the natural torus action on $R_D$,
        the uniqueness of the minimal solution implies that the minimal solution of the $\bar\partial$-equation is also
        invariant under the torus action. It follows that the minimal solution on $R_D$ induces a solution of the $d$-equation on $D$
        with the desired $L^2$-estimate.

 Now we consider the curvature strict positivity of the direct image bundles associated to a strictly pseudoconvex family of bounded domains (see Definition \ref{def:strict p.s.c family} for definition).
 We consider the families that all fibers are circular domains or Reinhardt domains.
  For any $k\in\mathbb{N}$, let $\rm \rm P^k$ be the space of homogenous polynomials on $\mc^m$ of degree $k$.


 \begin{thm}\label{thm(extend):curvartue positive circular domains}
 Let $\Omega\subset U\times\mc^m$ be a strictly pseudoconvex family of bounded domains over $U\subset\mc^n$,
 and let $(F,h^F)$ be a Nakano semi-positive trivial Hermitian vector bundle of finite rank $r$
 over some neighborhood of $\overline\Omega$ in $U\times\mc^m$.
 We assume that all fibers $\Omega_t\ (t\in U)$ are (connected) circular domains in $\mc^m$ containing the origin and $h^F_{(t,z)}$ is $\mathbb{S}^1$-invariant with respect to $z$.
 Let $\{e_1, \cdots , e_r\}$ be the canonical holomorphic frame of $F$.
 For any $k\in\mathbb{N},\ t\in U$, set
        	$$E^k_t:=\{f=\sum_{\lambda=1}^r f_\lambda \otimes e_\lambda|\ f_\lambda \in \rm \rm P^k\text{ for all }\lambda\}$$
        	with inner product $h_t$ given by
        	$$h_t(f,g):=\int_{\Omega_t} \sum_{\lambda,\mu=1}^rf_\lambda(z)\overline{g_\mu(z)}h^F_{(t,z)}(e_\lambda,e_\mu)d\lambda(z),\ \forall f,g \in E^k_t.$$
        	We set $E^k:=\cup_{t\in U}E^k_t$ and view it as a (trivial) holomorphic vector bundle over $U$ in a natural way, then the curvature of $(E^k,h)$
        	is strictly positive in the sense of Nakano.
        \end{thm}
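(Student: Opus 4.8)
The plan is to exhibit $(E^k,h)$ as a holomorphic, metrically orthogonal direct summand of the full Bergman bundle of the family, and then to transport to it the strict Nakano positivity of the latter.

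First I would form the Bergman bundle $\mathcal{E}\to U$ whose fibre over $t$ is the Hilbert space $\mathcal{E}_t:=A^2(\Omega_t,F)$ of holomorphic $L^2$-sections of $F|_{\Omega_t}$, equipped with its natural $L^2$-metric relative to $h^F$ and Lebesgue measure; by construction this metric restricts on the subspace $\mathrm{P}^k\otimes\mathbb{C}^r\subset\mathcal{E}_t$ to the form $h_t$ of the statement. Because $\Omega\to U$ is a \emph{strictly} pseudoconvex family and $(F,h^F)$ is Nakano semi-positive near $\overline\Omega$, the Chern curvature of $(\mathcal{E},\|\cdot\|_{L^2})$ is strictly positive in the sense of Nakano; this Berndtsson-type positivity of direct images in the strictly pseudoconvex situation (cf.\ the results on positivity of direct image sheaves quoted in the introduction, in their strict form for strictly pseudoconvex families) is the only substantial analytic input, and I take it as given.

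Second I would use the circular symmetry. Since each $\Omega_t$ is a connected circular domain containing $0$ and $h^F_{(t,\cdot)}$ is $\mathbb{S}^1$-invariant, the rotations $z\mapsto e^{i\theta}z$ act on $\mathcal{E}_t$ by $f\mapsto f(e^{i\theta}\cdot)$; this action is unitary (Lebesgue measure and $h^F$ being rotation-invariant) and is holomorphic on $\mathcal{E}$ (it is induced by the holomorphic automorphism $(t,z)\mapsto(t,e^{i\theta}z)$ of $\Omega$), hence a fibrewise-unitary holomorphic $\mathbb{S}^1$-action. A holomorphic function on a circular domain containing the origin has a homogeneous Taylor expansion converging in $L^2(\Omega_t)$, so the isotypic decomposition reads $\mathcal{E}_t=\overline{\bigoplus_{k\ge 0}E^k_t}$, an orthogonal decomposition whose weight-$k$ summand is exactly the space of $F$-valued homogeneous polynomials of degree $k$. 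Thus $E^k$ — whose fibre, as a vector space, is the $t$-independent space $\mathrm{P}^k\otimes\mathbb{C}^r$ — is a (trivial) finite-rank holomorphic sub-bundle of $\mathcal{E}$, and so is its orthogonal complement $\overline{\bigoplus_{j\ne k}E^j}$.

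Third I would descend the curvature. Since $\mathcal{E}=E^k\oplus(E^k)^\perp$ is an orthogonal direct sum of holomorphic sub-bundles, it is also a holomorphic direct sum, so by uniqueness of the Chern connection $\nabla^{\mathcal{E}}=\nabla^{E^k}\oplus\nabla^{(E^k)^\perp}$ and $\Theta^{\mathcal{E}}$ is block diagonal; equivalently, the second fundamental form of $E^k\hookrightarrow\mathcal{E}$ vanishes because the Chern connection is $\mathbb{S}^1$-equivariant and hence preserves weight spaces. Therefore $\Theta^{(E^k,h)}$ is the restriction of $\Theta^{\mathcal{E}}$ to $E^k$, and the strict Nakano positivity of $\Theta^{\mathcal{E}}$ forces that of $\Theta^{(E^k,h)}$, which is the claim. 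The hard part is entirely the first step — the strict Nakano positivity of the Bergman bundle of a strictly pseudoconvex family (Definition \ref{def:strict p.s.c family}); granting it, the remainder is formal, the only care needed being the book-keeping of the infinite-rank bundle $\mathcal{E}$, which can be avoided by running the same argument with $\mathcal{E}$ replaced by the finite-rank sub-bundle of $F$-valued polynomials of degree $\le N$ for $N\ge k$.
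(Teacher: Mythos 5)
Your steps 2 and 3 (the $\mathbb{S}^1$-isotypic decomposition of the fibre Bergman space and the block-diagonality of the Chern curvature for an orthogonal holomorphic direct sum) are correct and coincide with what the paper does via Lemma \ref{the direct sum of holomorphic vector bundles}. The problem is step 1, which you declare to be ``the only substantial analytic input'' and ``take as given'': there is no such result to quote. The Berndtsson/Deng--Zhang--Zhou direct-image theorems give only Nakano \emph{semi}-positivity of the $L^2$ bundle when $(F,h^F)$ is merely Nakano semi-positive, as it is here; the whole point of the theorem (stated explicitly in the introduction) is that the \emph{strict} positivity does not come from the curvature of $F$ but must be extracted from the strict pseudoconvexity of the boundary of the family. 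Asserting ``strictly pseudoconvex family $\Rightarrow$ strictly Nakano positive Bergman bundle'' is essentially assuming the conclusion, in a form that is if anything stronger than the theorem.

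Worse, the natural route to that assertion runs in the opposite direction from your reduction. The paper first proves a curvature lower bound for the full (infinite-rank) $L^2$ bundle in terms of a collar integral near $\partial\Omega_{t_0}$, by replacing $h^F$ with $h^F_N=h^F e^{-N\max_{(1/N^3,1/N^3)}\{0,\rho\}}$ and letting $N\to\infty$ so that $N\,e^{-N\rho}$ concentrates on the boundary; this produces a term of the form
$$\delta_4\sum_{j,\beta}\int_{\partial\Omega_{t_0}}\ \inf_{0\leq\tau\leq s}|u_{j\beta}(\zeta+\tau\mathbf{n}_\zeta)|^2\,dS(\zeta).$$
Converting this boundary infimum into a positive multiple of $\sum_j h^E(u_j,u_j)$ uses, in an essential way, that the components $u_{j\beta}$ are homogeneous polynomials of degree $k$: Cauchy estimates on a larger ball give a uniform gradient bound, and a trace (Poincar\'e-type) inequality from \cite{DJQ} then controls $\int_{\Omega_{t_0}}|u_{j\beta}|^2$ by $\int_{\partial\Omega_{t_0}}|u_{j\beta}|^2$. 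For a general element of $A^2(\Omega_t,F)$ the infimum over the collar can be far smaller than the boundary values, so strict positivity of the full Bergman bundle is not obtained this way (and is not proved in the paper). Your fallback of truncating to polynomials of degree $\leq N$ does not help, since the strict positivity of that finite-rank bundle is exactly as unproven as the statement you want. A final, smaller point: even granting strict positivity of $\mathcal{E}$ at each $t_0$, to conclude via the integral characterization of Lemma \ref{lem: optimal L2-estimate} one needs a locally uniform lower bound $\theta$ and a limiting argument in $N$; this bookkeeping is also absent from your sketch.
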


Considering strictly pseudoconvex family of Reinhardt domains, we have the following parallel result, whose proof is similar to that
of Theorem \ref{thm(extend):curvartue positive circular domains}, so we omit it.

        \begin{thm}\label{qin:application}
        Let $\Omega\subset U\times\mc^m$ be a strictly pseudoconvex family of bounded domains over a domain $U\subset\mc^n,$
        and let $(F,h^F)$ be a Nakano positive trivial Hermitian vector bundle of finite rank $r$ defined on some neighborhood of $\overline\Omega$.
		We assume that all fibers $\Omega_t\ (t\in U)$ are (connected) Reinhardt domains and $h^F_{(t,z)}$ is $\mathbb{T}^n$-invariant with respect to $z$.
        Let $\{e_1, \cdots , e_r\}$ be the canonical holomorphic frame of $F$.
        For any  $k_1,\cdots, k_m\in\mathbb{N},\ t\in U$, set
        $$E^{k_1,\cdots, k_m}_t:=\{f=\sum_{\lambda=1}^r f_\lambda \otimes e_\lambda|\ f_\lambda \in \mc z_1^{k_1}\cdots z_m^{k_m}\text{ for all }\lambda\}$$
        with inner product $h_t$ given by
        $$h_t(f,g):=\int_{\Omega_t} \sum_{\lambda,\mu=1}^rf_\lambda(z)\overline{g_\mu(z)}h^F_{(t,z)}(e_\lambda,e_\mu)d\lambda(z),\ \forall f,g \in E^{k_1,\cdots, k_m}_t.$$
        We set $$E^{k_1,\cdots, k_m}:=\mathop{\cup}_{t\in U}E^{k_1,\cdots, k_m}_t$$ and view it as a (trivial) holomorphic vector bundle over $U$ in a natural way, then the curvature of $(E^{k_1,\cdots, k_m},h)$
        is strictly positive in the sense of Nakano.
        \end{thm}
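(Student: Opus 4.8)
The plan is to run, for the torus $\mathbb{T}^m$ acting coordinate-wise on the $z$-variables, exactly the argument that proves Theorem~\ref{thm(extend):curvartue positive circular domains} for the circle group $\mathbb{S}^1$. First I would introduce the ambient (a~priori infinite rank) Hermitian holomorphic bundle $\mathcal{E}\to U$ whose fibre over $t$ is the space of $F$-valued holomorphic functions on $\Omega_t$ that are square integrable with respect to $h^F_{(t,\cdot)}$ and the Lebesgue measure, equipped with the inner product defined by the same formula as $h_t$ in the statement. Since $\Omega$ is a strictly pseudoconvex family of bounded domains (in the sense of Definition~\ref{def:strict p.s.c family}) and $(F,h^F)$ is Nakano positive on a neighbourhood of $\overline{\Omega}$, the direct-image curvature positivity used in the proof of Theorem~\ref{thm(extend):curvartue positive circular domains} (Berndtsson-type positivity for direct images, upgraded to strictness by the strict pseudoconvexity of the family) shows that $(\mathcal{E},h)$ is strictly Nakano positive.

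Next I would exploit the torus symmetry. Let $\mathbb{T}^m$ act on $U\times\mc^m$ by $\theta\cdot(t,z_1,\dots,z_m)=(t,\theta_1 z_1,\dots,\theta_m z_m)$; since each $\Omega_t$ is Reinhardt this action preserves $\Omega$ fibrewise, and since $F$ is trivial with canonical holomorphic frame $\{e_1,\dots,e_r\}$ and $h^F_{(t,z)}$ is $\mathbb{T}^m$-invariant in $z$, the action lifts (fixing the frame) to an action on $\mathcal{E}$ by holomorphic bundle automorphisms over $U$ that are fibrewise linear isometries, using rotation invariance of Lebesgue measure together with invariance of $h^F$. The monomial expansion of holomorphic functions on a Reinhardt domain then gives an orthogonal holomorphic Hilbert-sum decomposition $\mathcal{E}=\widehat{\bigoplus}_{\alpha\in\mathbb{Z}^m}\mathcal{E}^\alpha$ into isotypic components, where $\mathcal{E}^\alpha_t$ is the intersection of $\mathcal{E}_t$ with the span of $z^\alpha e_1,\dots,z^\alpha e_r$. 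For $\alpha=(k_1,\dots,k_m)\in\mathbb{N}^m$ the monomial $z^\alpha$ is bounded on the bounded domain $\Omega_t$ and $h^F$ is continuous up to $\overline{\Omega_t}$, so each $z^\alpha e_\lambda$ lies in $\mathcal{E}_t$; hence $\mathcal{E}^{(k_1,\dots,k_m)}$ is a genuine rank-$r$ holomorphic subbundle of $\mathcal{E}$ which, with the induced metric, is precisely $(E^{k_1,\dots,k_m},h)$.

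Finally, the orthogonal projection $P_\alpha$ of $\mathcal{E}$ onto $\mathcal{E}^\alpha$ is obtained by averaging the torus action against the corresponding character, so it is a holomorphic, self-adjoint, idempotent endomorphism of $(\mathcal{E},h)$; such an endomorphism is parallel for the Chern connection, so $\mathcal{E}^{(k_1,\dots,k_m)}$ is a parallel holomorphic subbundle of $\mathcal{E}$, and therefore the Chern connection and Chern curvature of $(E^{k_1,\dots,k_m},h)$ coincide with the restrictions of those of $(\mathcal{E},h)$. Since the restriction of a strictly Nakano positive curvature form to a parallel subbundle is again strictly Nakano positive -- one simply restricts the positive Hermitian form on $T_U\otimes\mathcal{E}$ to the subspace $T_U\otimes E^{k_1,\dots,k_m}$, on which it is still positive on all nonzero elements -- it follows that $(E^{k_1,\dots,k_m},h)$ is strictly Nakano positive, as claimed.

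I expect the main obstacle to be the first step: making the direct-image positivity precise for the infinite rank bundle $\mathcal{E}$ and, crucially, extracting \emph{strict} Nakano positivity from the strict pseudoconvexity of the family rather than merely semi-positivity from pseudoconvexity. This is exactly the substantive content already carried out in the proof of Theorem~\ref{thm(extend):curvartue positive circular domains}, and it transfers here verbatim; alternatively, one can bypass the infinite rank issue by writing Berndtsson's curvature formula directly on each finite rank isotypic piece. A secondary, softer point that still needs care is the verification that the $\mathbb{T}^m$-action is genuinely by holomorphic isometric automorphisms over $U$ and that the projections $P_\alpha$ are holomorphic and self-adjoint, which is where the $\mathbb{T}^m$-invariance of $h^F$ and the Reinhardt property of the fibres are used.
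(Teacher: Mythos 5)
Your overall architecture --- decompose the direct image into $\mathbb{T}^m$-isotypic components, observe that $E^{k_1,\cdots,k_m}$ is an orthogonal holomorphic direct summand so that its Chern curvature is the restriction of that of the ambient bundle --- is the same mechanism the paper uses (via Lemma \ref{the direct sum of holomorphic vector bundles}), and your verification that the torus acts by holomorphic isometries and that the isotypic projections are parallel is fine. The genuine gap is in your first step, where you assert that the infinite rank bundle $\mathcal{E}$ is itself \emph{strictly} Nakano positive and that this ``transfers verbatim'' from the proof of Theorem \ref{thm(extend):curvartue positive circular domains}. That proof does not establish strict positivity of the infinite rank direct image, and its method does not yield it: what Lemma \ref{lem:key lemma} gives on the big bundle is only the lower bound by $\int_D\sum H(h^F_{N,\epsilon})_{jl\beta\gamma}u_{j\beta}\overline{u_{l\gamma}}\,d\lambda$, which after the boundary--concentration argument ($N\to\infty$) is controlled from below by $\delta_4\sum_{j,\beta}\int_{\partial\Omega_{t_0}}\inf_{0\le\tau\le s}|u_{j\beta}(\zeta+\tau\mathbf{n}_\zeta)|^2\,dS$. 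Converting this boundary quantity into a uniform multiple of $\sum_j h^E(u_j,u_j)$ is exactly where the finite rank, polynomial nature of the isotypic piece enters in an essential way: after normalizing, Cauchy's inequality gives a uniform gradient bound $M_2$ on the enlarged domain $D$, and only then does the trace inequality of \cite{DJQ} apply. For a general element of the infinite rank fibre (an $L^2$ holomorphic section on $\Omega_t$, with no boundedness near $\partial\Omega_t$ and no uniform derivative control) no such inequality with a uniform constant holds, so strictness cannot be proved upstairs and then restricted. The correct order --- and the one the paper follows --- is the opposite: first restrict the curvature lower bound to the finite rank monomial subbundle using the orthogonal decomposition, then extract strict positivity there, and finally pass $N\to\infty$ via the integral characterization of Lemma \ref{lem: optimal L2-estimate}.

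A secondary point you gloss over: your $\mathcal{E}$ has fibres $A^2(\Omega_t,h^F)$, which vary with $t$ and do not obviously form a holomorphic Hermitian bundle with a Chern connection to which Lemma \ref{lem:key lemma} applies. The paper avoids this by localizing near $t_0$, working over a fixed enlarged pseudoconvex Reinhardt domain $D\supset\supset\Omega_{t_0}$ with fibre the $L^2$ holomorphic sections on $D$, and approximating the characteristic function of $\Omega_t$ by the weights $h^Fe^{-N\max\nolimits_{(1/N^3,1/N^3)}\{0,\rho\}}$; this localization-and-approximation step is also what produces the boundary integral that ultimately yields strictness, and it is absent from your outline. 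Your alternative suggestion --- run the curvature estimate directly on each finite rank isotypic piece --- is indeed the right fix, but it is not a side remark: it is the proof.
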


In Theorem \ref{thm(extend):curvartue positive circular domains} and Theorem \ref{qin:application},
the vector bundle $F$ is assumed to be trivial just for simplicity of presentation.

We should remark that the strict positivity of the curvature of the direct images in Theorem \ref{thm(extend):curvartue positive circular domains} and Theorem \ref{qin:application}
comes from the strict pseudoconvexity of the families,
but not from the curvature positivity of $(F, h^F)$.
Similar considerations have appeared in \cite{Wang17} and \cite{DHQ} for direct images of Hermitian line bundles.
The proofs of Theorem \ref{thm(extend):curvartue positive circular domains} and Theorem \ref{qin:application} rely on a development of the method in \cite{DHQ}.
Our argument depends on Deng-Ning-Wang-Zhou's integral characterization of Nakano positivity in \cite{DNWZ}
and is different from the method in  \cite{Wang17}.


If we just consider curvature positivity but not strict positivity of the direct image,
Theorem \ref{thm(extend):curvartue positive circular domains} and Theorem \ref{qin:application} are due to Deng-Zhang-Zhou \cite{DZZ17} for the line bundle case,
based on the fundamental result of Berndtsson in \cite{Ber09}.

Using our principle and Theorem \ref{qin:application}, we can get the strict curvature positivity of the direct images of Riemannian vector bundles over a strict convex family of
        bounded domains in $\mr^m$ (see Definition \ref{def:strict convex family} for definition).	
        
Recall that a $C^2$ function defined on a domain in $\mr^n$ is called strictly convex if its real Hessian is positive definite everywhere.

\begin{thm}\label{qin:convex}
			 Let $D\subset U_0\times\mr^m$ be a  strictly convex family of bounded domains (with connected fibers) over a domain $U_0\subset\mr^n,$
            and let $(F,g^F)$ be a trivial vector bundle of finite rank $r$ defined on some neighborhood  of $\overline D$.
            Let $\{e_1,\cdots,e_r\}$ be the canonical frame of $F$.
            For any $t\in U_0$, set $E_t:=\mr^r$, with an inner product $g_t^E$ given by
            $$g_t^E(u,v):=\int_{D_t}\sum_{\lambda,\mu=1}^ru_\lambda v_\mu g^F_{(t,x)}(e_\lambda,e_\mu)d\lambda(x)$$
            for all
            $$u:=(u_1,\cdots,u_r)\in E_t,\ v:=(v_1,\cdots,v_r)\in E_t.$$
            We set $E:=\cup_{t\in U_0}E_t$ and view it as a Riemannian (trivial) vector bundle over $U_0$, then the curvature of $(E,g^E)$
            is strictly positive in the sense of Nakano if $(F,g^F)$ is Nakano positive on some neighborhood of $\overline D$.
\end{thm}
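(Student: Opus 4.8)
The plan is to transport the whole situation to a torus-invariant complex family by the exponential map, apply Theorem~\ref{qin:application} with $k_1=\cdots=k_m=0$, and then descend, exactly in the spirit of the principle of the introduction and of the proof of Theorem~\ref{thm:Cor}. Since strict positivity of the curvature of $(E,g^E)$ is a pointwise condition on $U_0$, we may first shrink $U_0$ so that $\overline D$ is compact. Put $\widetilde U_0:=\exp(U_0+i\mr^n)\subset(\mc^*)^n$ and
$$\Omega:=\exp(D+i\mr^{n+m})=\{(w,z)\in(\mc^*)^n\times(\mc^*)^m:\ (\log|w|,\log|z|)\in D\},$$
where $\log|w|:=(\log|w_1|,\dots,\log|w_n|)$ and likewise for $z$. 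Then $\widetilde U_0$ is a Reinhardt domain, and for each $w$ the fibre $\Omega_w=\exp(D_{\log|w|}+i\mr^m)$ is a connected, bounded Reinhardt domain in $(\mc^*)^m$ (bounded because $D_{\log|w|}$ is bounded, so the moduli $|z_j|=e^{x_j}$ are bounded and bounded away from $0$).

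Two translations have to be carried out. First, since translation in the imaginary variables becomes rotation after $\exp$, and a function independent of the imaginary variables is convex if and only if its $\exp$-pullback is plurisubharmonic---with strict convexity corresponding to strict plurisubharmonicity in the fibre directions---one checks directly from Definitions~\ref{def:strict convex family} and~\ref{def:strict p.s.c family} that $\Omega$ is a strictly pseudoconvex family of bounded domains over $\widetilde U_0$: a convex defining function $\rho$ of $D$, taken independent of the imaginary variables, pulls back to a plurisubharmonic defining function $\rho\circ\log$ of $\Omega$ whose differential does not vanish on $\partial\Omega$ because $\log$ is a submersion. Second, let $\widetilde F$ be the trivial bundle of rank $r$ over a neighbourhood of $\overline\Omega$ with the canonical frame $\{e_1,\dots,e_r\}$ and Hermitian metric
$$h^{\widetilde F}_{(w,z)}(e_\lambda,e_\mu):=(|z_1|\cdots|z_m|)^{-2}\,g^F_{(\log|w|,\log|z|)}(e_\lambda,e_\mu).$$
Here $(|z_1|\cdots|z_m|)^{-2}=e^{-\psi}$ with $\psi:=2\sum_j\log|z_j|$ pluriharmonic on $(\mc^*)^{n+m}$, so $h^{\widetilde F}$ and $g^F\circ\log$ have the same Chern curvature; and $g^F\circ\log$ is Nakano positive on $\exp(V+i\mr^{n+m})$ whenever $g^F$ is Nakano positive on a neighbourhood $V$ of $\overline D$, by the real--complex correspondence for the curvature operator of Definition~\ref{defn:curvature operator} applied to a metric independent of the imaginary variables. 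Hence $h^{\widetilde F}$ is Nakano positive near $\overline\Omega$, and it is obviously $\mathbb{T}^{n+m}$-invariant, in particular $\mathbb{T}^m$-invariant in $z$. The point of the weight $(|z_1|\cdots|z_m|)^{-2}$ is to absorb the polar Jacobian: with $z_j=e^{\xi_j+i\eta_j}$ one has $d\lambda(z)=\prod_j e^{2\xi_j}\,d\xi_j\,d\eta_j$, so after integrating out $\eta\in[0,2\pi)^m$,
$$\int_{\Omega_w}\sum_{\lambda,\mu}f_\lambda\overline{g_\mu}\,h^{\widetilde F}_{(w,z)}(e_\lambda,e_\mu)\,d\lambda(z)=(2\pi)^m\int_{D_{\log|w|}}\sum_{\lambda,\mu}f_\lambda\overline{g_\mu}\,g^F_{(\log|w|,\xi)}(e_\lambda,e_\mu)\,d\lambda(\xi)$$
for all $f,g\in\mc^r$.

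Now apply Theorem~\ref{qin:application} to the data $(\Omega,\widetilde U_0,(\widetilde F,h^{\widetilde F}))$ with $k_1=\cdots=k_m=0$: the resulting Hermitian holomorphic bundle $E^{0,\dots,0}$ over $\widetilde U_0$ has strictly positive curvature in the sense of Nakano. By the displayed identity, $E^{0,\dots,0}$ is, up to the positive constant $(2\pi)^m$, exactly the $\exp$-pullback of the complexification of $(E,g^E)$ (replace $\mr^r$ by $\mc^r$ and $g^E_t$ by the Hermitian form given by the same integral), and in particular it is $\mathbb{T}^n$-invariant in $w$. Scaling a metric by a constant does not change its curvature, and for a $\mathbb{T}^n$-invariant Hermitian metric the Chern curvature operator at $w$ differs from the curvature operator $\Theta^{(E,g^E)}$ at $x=\log|w|$ only by a positive constant and the invertible substitution $s_j\mapsto \overline{w_j}^{-1}s_j$; hence strict Nakano positivity of $E^{0,\dots,0}$ over $\widetilde U_0$ forces strict Nakano positivity of $(E,g^E)$ over $U_0$, which is the assertion.

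The substantive content, as elsewhere in the paper, lies in the two translations: that a strictly convex family of bounded domains corresponds under $\exp$ to a strictly pseudoconvex family (that is, that Definitions~\ref{def:strict convex family} and~\ref{def:strict p.s.c family} match up, boundary behaviour and fibrewise strictness included), and that Nakano positivity---and, crucially, \emph{strict} Nakano positivity---of a torus-invariant Hermitian vector bundle is equivalent to that of the associated Riemannian vector bundle in the sense of Definition~\ref{defn:curvature operator}; both are instances of the principle of the introduction and rest on the same computation used for Theorem~\ref{thm:Cor}. The only bookkeeping special to this statement is the absorption of the polar Jacobian into $h^{\widetilde F}$, together with the remark that this is innocuous for Nakano positivity because $2\sum_j\log|z_j|$ is pluriharmonic.
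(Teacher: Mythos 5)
Your proposal is correct and follows essentially the same route as the paper: exponentiate to obtain a torus-invariant strictly pseudoconvex family with Reinhardt fibers, twist the metric by the Jacobian factor $(|z_1|\cdots|z_m|)^{-2}$ (harmless since $2\sum_j\log|z_j|$ is pluriharmonic), apply Theorem \ref{qin:application} with $k_1=\cdots=k_m=0$, and descend to the real bundle. The only (minor) deviation is that you also exponentiate the base, so your descent needs the extra observation that the Chern curvature in the $w$-coordinates differs from $\Theta^{(E,g^E)}$ at $x=\log|w|$ by the invertible substitution $u_j\mapsto u_j/w_j$ and a positive constant, whereas the paper keeps the base as the tube $U_0+i\mr^n$ and transfers Nakano positivity via the tube-invariance correspondence directly; both versions are sound.
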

		
The curvature positivity of $(E, g^E)$  is originally proved in \cite{Rau13} in the case that $D=\mr^n\times\mr^m$ is a product,
based on the idea of \cite{Ber09} and Fourier transform, and is proved by different methods in \cite{Cor19} and \cite{DHJ20}.
Here we are interested in the curvature strict positivity of $(E, g^E)$ and on deriving it from Theorem \ref{qin:application}
following the principle proposed in the beginning of this section.

\subsection*{Acknowledgements}
The authors thank the referee for suggestion on improving the presentation the article.
This research is supported by National Key R\&D Program of China (No. 2021YFA1003100),
NSFC grants (No. 12471079), and the Fundamental Research Funds for the Central Universities.

		\section{Preliminaries}\label{sec:preliminary}
		In this section, we fix some notations, conventions and  collect some knowledge that are needed in our discussions.
        \subsection{Notations and conventions}\ \\
        \indent Our convention for $\mathbb{N}$ is that $\mathbb{N}:=\{0,1,2,3,\cdots\}$,
        and set $\mc^*:=\{z\in\mc|\ z\neq 0\}.$ For any complex number $z\in \mc$, let $\im(z)$ denote the imaginary part of $z$.
       We say $U\subset\mr^n$ is a domain of $\mr^n$ if it is a connected
        open subset of $\mr^n$. \\
        \indent   For any $k\in\mathbb{N}\cup\{\infty\}$.  If $U\subset \mr^n$ is an open subset, then we let $C^k(U)$ be the space of complex valued functions which are of class $C^k$.
         A real valued function $\varphi\in C^2(U)$ is strictly convex if its real Hessian $\left(\varphi_{jk}\right)_{1\leq j,k\leq n}$ is positive definite for any $x\in U$, where
        $$\varphi_{jk}:=\frac{\partial^2\varphi}{\partial x_j\partial x_k},\ 1\leq j,k\leq n,$$
        and we let $\left(\varphi^{jk}\right)_{1\leq j,k\leq n}$ be the inverse matrix of its real Hessian.
        We recall that an open subset $U\subset\mr^n$ is strictly convex if it has $C^2$-boundary, and it has a $C^2$-boundary defining function which is strictly convex in a neighborhood of
        $\overline{U}$.\\
        \indent If $U\subset\mc^n$ is a open subset, let $\mo(U)$ denote the space of holomorphic functions in $U$.
        An upper semi-continuous function $f\colon U\rw [-\infty,\infty)$
        is plurisubharmonic if its restriction to every complex line in $U$ is subharmonic, i.e. satisfying the submean value inequality.
        A real valued function $\varphi\in C^2(U)$ is strictly plurisubharmonic if its complex Hessian $\left(\varphi_{j\bar{k}}\right)_{1\leq j,k\leq n}$ is positive definite for any $z\in U$, where
        $$\varphi_{j\bar{k}}:=\frac{\partial^2\varphi}{\partial z_j\partial \bar z_k},\ 1\leq j,k\leq n,$$
        and we let $\left(\varphi^{j\bar{k}}\right)_{1\leq j,k\leq n}$ be the inverse matrix of its complex Hessian.
        $U$ is pseudoconvex if it has a smooth plurisubharmonic function $p$ such that its sublevel set $\{z\in U|\ p(z)\leq c\}$ is relatively compact in $U$
        for any $c\in\mr$.  $U$ is strictly pseudoconvex if it has $C^2$-boundary, and it has a $C^2$-boundary defining function which is strictly plurisubharmonic in
        a neighborhood of $\overline{U}$.\\
        \indent Let $d\lambda$ denote the Lebesgue measure on $\mr^n$. Note that all measurable sets, measurable maps, measurable differential forms
        will be Lebesgue measurable. If $A\subset\mr^n$ is a measurable subset, we also write $|A|$ for the measure of
        $A.$ \\
         \indent For any open subset $U\subset\mr^n$, let $L^2(U)$ denote the space of $L^2$-integrable complex valued functions on $U$. For any $f\in L^2(U)$, let $\|f\|_{L^2(U)}$ denote the $L^2$-norm of $f$.  \\
         \indent For any open subset $U\subset\mr^n$ (resp. $U\subset \mc^n$), and for any $k\in\mathbb{N}$
         (resp. $p,q\in\mathbb{N}$), let $\Lambda^k T^*U$ (resp. $\Lambda^{p,q}T^*U)$ denote the bundle of smooth, complex valued $k$-forms (resp. $(p,q)$-forms) on $U$.\\
         \indent If $(X,\omega)$ is a K\"ahler manifold, then we write $d\lambda(\omega)$ for the Riemannian volume form determined by $\omega$.\\
         \indent  A metric on a vector bundle will be of class $C^2$, all sections of a vector bundle will be global unless stated otherwise. Let $(E,h^E)$ be a Riemannian smooth (resp. Hermitian holomorphic) vector bundle of finite rank $r$  over an open subset  $U\subset\mr^n$ (resp. $U\subset\mc^n$),
         and let  $e_1,\cdots,e_r$ be a local smooth (resp, holomorphic) frame of $E$.  For any $x\in U$, let $h^E_x$ denote the inner product on the fiber $E_x$ of $x$.
         For any $1\leq \alpha,\beta\leq r,$ set
         $$h^E_{\alpha\bar{\beta}}:=h^E(e_\alpha,e_\beta),$$
         and let $\left((h^E)^{\alpha\bar{\beta}}\right)_{1\leq\alpha,\beta\leq n}$ be the inverse matrix of
         $\left(h^E_{\alpha\bar{\beta}}\right)_{1\leq \alpha,\beta\leq r}.$
         Let $u,v$ be any measurable sections of $E$,  write
         $$u=\sum_{\lambda=1}^ru_\lambda e_\lambda,\ v=\sum_{\mu=1}^rv_\mu e_{\mu},$$
         then we let
         $$|u|^2:=\sum_{\lambda=1}^r|u_\lambda|^2,\ \langle u,v\rangle_{h^E}:=h^E(u,v)=\sum_{\lambda,\mu=1}^ru_\lambda\overline{v_\mu}h^E(e_\lambda,e_\mu),$$
         $$|u|_{h}^2:=\langle u,u\rangle_{h},\ (u,v)_{h}:=\int_{U}\langle u,v\rangle_{h^E} d\lambda,\ \|u\|_{h^E}^2:=(u,u)_{h^E}.$$
         Let $L^2(U,E)$ be the space of measurable $E$-valued sections which are $L^2$-integrable on $U$, i.e.
         $$L^2(U,E):=\{u\text{ is a measurable } E\text{-valued section of } E|\ \|u\|_{h^E}^2<\infty\}.$$
         For any $k\in\mathbb{N}\cup\{\infty\}$, let $C^k(U,E)$ be the space of $E$-valued sections which are of class $C^k$.
         We also let $C_c^k(U,E)$ be the space of elements of $C^k(U,E)$ which has compact support.
         \subsection{Regularized max function}\ \\
		\indent Let $\psi \in C^{\infty}(\mr)$ be a nonnegative even function with support in $[-1, 1]$ such that
		   $$\int_{\mr} \psi(x) dx=1.$$
		\begin{lem}[{\cite[Lemma 5.18, Chapter I]{Dem}}]\label{lem: Regularization max functions}
			For any $\eta:=(\eta_1, \eta_2) \in (0, +\infty) \times (0, +\infty)$, the function
			$\max\nolimits_{\eta}\colon \mr^2\rw \mr$ defined by
			$$(t_1,t_2)\rwo \int_{\mr^2} \max\{t_1+x_1, t_2+x_2\} \frac{1}{\eta_1 \eta_2} \psi\left(\frac{x_1}{\eta_1}\right) \psi\left(\frac{x_2}{\eta_2}\right) d x_1 d x_2$$
			has the following properties
			\begin{itemize}
				\item[(1)] $\max_{\eta}\{t_1, t_2\}$ is non decreasing in all variables, smooth and convex on $\mr^2$;
				\item[(2)] $\max\{t_1, t_2\} \leq  \max_{\eta}\{t_1, t_2\} \leq \max\{t_1 + \eta_1, t_2 + \eta_2\} $;
				\item[(3)] If $u_1, u_2$ are plurisubharmonic functions, then $\max_{\eta}\{u_1, u_2\}$ is also plurisubharmonic.
			\end{itemize}
			
		\end{lem}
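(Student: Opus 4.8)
The plan is to realize $\max_\eta$ as a convolution of the elementary function $M(t_1,t_2):=\max\{t_1,t_2\}$ with a mollifier, and then to read off all three properties from the facts that $M$ is continuous, convex, non-decreasing in each variable, and $1$-Lipschitz. Substituting $y_i=t_i+x_i$ and using that $\psi$ is even gives $\max_\eta\{t\}=(M*\rho_\eta)(t)$, where $\rho_\eta(x):=\frac{1}{\eta_1\eta_2}\psi(x_1/\eta_1)\psi(x_2/\eta_2)$ is a nonnegative function with $\int_{\mr^2}\rho_\eta\,dx=1$, supported in the rectangle $R_\eta:=[-\eta_1,\eta_1]\times[-\eta_2,\eta_2]$, and with vanishing first moments $\int_{\mr^2}x_i\,\rho_\eta(x)\,dx=0$ (since $x_i\psi(x_i/\eta_i)$ is odd). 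For part (1): $M$ is locally integrable and $\rho_\eta\in C_c^\infty(\mr^2)$, so $M*\rho_\eta\in C^\infty$ with $\partial^\alpha(M*\rho_\eta)=M*\partial^\alpha\rho_\eta$; monotonicity holds because for each fixed $x$ the map $t\mapsto M(t+x)$ is non-decreasing in each variable and $\rho_\eta\ge0$; and convexity holds because a $\rho_\eta\,dx$-weighted average of the convex functions $t\mapsto M(t+x)$ is convex, directly from the definition of convexity.

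For part (2), the lower bound is Jensen's inequality applied to the convex function $M$ and the probability measure $\rho_\eta(y-t)\,dy$, whose barycenter is $\int_{\mr^2}y\,\rho_\eta(y-t)\,dy=t$ by the vanishing first moments; thus $\max_\eta\{t\}\ge M(t)=\max\{t_1,t_2\}$. The upper bound follows because on $R_\eta$ one has $t_i+x_i\le t_i+\eta_i$, hence $M(t+x)\le M(t_1+\eta_1,t_2+\eta_2)$ there by monotonicity of $M$; integrating against $\rho_\eta$ gives $\max_\eta\{t\}\le\max\{t_1+\eta_1,t_2+\eta_2\}$.

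For part (3), set $\chi:=\max_\eta$, which by (1) is a $C^\infty$ convex function on $\mr^2$ that is non-decreasing in each variable. If $v_1,v_2$ are smooth plurisubharmonic on an open set $U\subset\mc^n$, then for every $\xi\in\mc^n$ the chain rule gives
$$\sum_{p,q}\frac{\partial^2\chi(v_1,v_2)}{\partial z_p\partial\bar z_q}\xi_p\bar\xi_q=\sum_{j=1}^2\frac{\partial\chi}{\partial t_j}(v_1,v_2)\sum_{p,q}\frac{\partial^2 v_j}{\partial z_p\partial\bar z_q}\xi_p\bar\xi_q+\sum_{j,k=1}^2\frac{\partial^2\chi}{\partial t_j\partial t_k}(v_1,v_2)\Bigl(\sum_p\frac{\partial v_j}{\partial z_p}\xi_p\Bigr)\overline{\Bigl(\sum_q\frac{\partial v_k}{\partial z_q}\xi_q\Bigr)}\ge0,$$
since $\partial\chi/\partial t_j\ge0$, each Levi form of $v_j$ is $\ge0$, and the real Hessian of $\chi$ is positive semidefinite; hence $\chi(v_1,v_2)$ is smooth plurisubharmonic. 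For general plurisubharmonic $u_1,u_2$ one works locally and writes $u_i=\lim_{\varepsilon\downarrow0}u_i^\varepsilon$ as a decreasing limit of smooth plurisubharmonic functions on a slightly smaller open set. The bounds in (2) show $\chi$ extends to a continuous, non-decreasing function on $[-\infty,\infty)^2$ (e.g. $\lim_{t_1\to-\infty}\chi(t_1,t_2)=t_2$ by monotone convergence and $\int x_2\rho_\eta\,dx=0$), so $\chi(u_1^\varepsilon,u_2^\varepsilon)$ decreases pointwise to $\chi(u_1,u_2)$; being a decreasing limit of plurisubharmonic functions, $\chi(u_1,u_2)$ is plurisubharmonic. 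Alternatively, one may invoke the standard fact that a convex function which is non-decreasing in each variable, composed with plurisubharmonic functions, is plurisubharmonic.

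The whole argument is routine; the only point needing a little care is the passage to the limit in (3) — namely verifying that $\max_\eta$ extends continuously and monotonically to $[-\infty,\infty)^2$, so that the decreasing mollifier approximation of the $u_i$ passes correctly to the limit. Everything else is a direct consequence of the convexity, monotonicity and Lipschitz continuity of the ordinary $\max$ function together with the normalization, support and symmetry of the mollifier $\rho_\eta$.
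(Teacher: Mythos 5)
Your proof is correct, and it is the standard argument: the paper does not prove this lemma at all but simply quotes it from Demailly's book, where the proof is exactly this convolution computation (smoothness and convexity of $M*\rho_\eta$, Jensen plus the vanishing first moments for the two-sided bound, and composition of a smooth convex function that is non-decreasing in each variable with plurisubharmonic functions, handled by mollifying the $u_i$). Your added care about extending $\max_\eta$ continuously to $[-\infty,\infty)^2$ before passing to the decreasing limit is exactly the right point to flag, so nothing is missing.
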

        \subsection{Curvature positivity of Hermitian holomorphic vector bundles}\

		\indent Let $U\subset \mc_t^n$ be a domain, and let $(E,h^E)$ be a Hermitian holomorphic trivial vector bundle over $U$ of finite rank $r$.
		The Chern connection is now given by a collection of differential operators $\{D_{t_j}^E\}_{1\leq j\leq n}$, which satisfies
		$$
        \partial_{t_j}h^E(u, v) = h^E(D_{t_j}^E u, v) + h^E(u, \bar{\partial}_{t_j} v),\ \forall u,v \in C^2(U,E),
        $$
		where $\partial_{t_j}:= \frac{\partial}{\partial t_j}$ and $\bar{\partial}_{t_j}:= \frac{\partial}{\partial \bar{t}_j}$
        for any $1\leq j\leq n.$ For any $1\leq j,k\leq n,$ let $\Theta^{(E,h^E)}_{jk}:=[D_{t_j}^E, \bar{\partial}_{t_k}]$ (Lie bracket), then the Chern curvature of
       $(E,h^E)$ is given by
		$$
         \Theta^{(E,h^E)} =\sum_{j,k=1}^n \Theta^{(E,h^E)}_{jk} dt_j \wedge d\bar{t}_k.
        $$
		\begin{defn}\label{Curvature positivity of Berndtsson}
			The curvature of $(E,h^E)$ is said to be positive (resp. strictly positive) in the sense of Nakano if for any nonzero $n$-tuple $(u_1, \cdots  , u_n)\\
           \in C^\infty(U,E),$
            we have
			$$
            \sum_{j,k=1}^n h^E\left(\Theta_{jk}^{(E,h^E)} u_j, u_k\right) \geq 0\ (\text{resp.} >0).
            $$
		\end{defn}
         We need another simple lemma in the proof of Theorem \ref{thm(extend):curvartue positive circular domains}.
		\begin{lem}[see {\cite[Theorem (14.5), Chapter V]{Dem}}]\label{the direct sum of holomorphic vector bundles}
			Let $(F,h)$ be a Hermitian holomorphic vector bundle over $X$, and let $E, G$ be two holomorphic subbundles of $F$ such that
			$F=E\oplus G$ and $E$ is orthogonal to $G$, then the curvature of these bundles satisfies
			$$\Theta^{(F,h)}=\Theta^{(E,h)} \oplus \Theta^{(G,h)}.$$
		\end{lem}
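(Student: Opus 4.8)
The plan is to reduce the identity to a local computation in a holomorphic frame adapted to the orthogonal decomposition $F=E\oplus G$. First I would fix an arbitrary point of $X$ and, using that $E$ and $G$ are holomorphic subbundles of $F$ with $F=E\oplus G$, choose a neighborhood on which $E$ has a holomorphic frame $e_1,\dots,e_p$ and $G$ has a holomorphic frame $e_{p+1},\dots,e_r$; their concatenation $(e_1,\dots,e_r)$ is then a holomorphic frame of $F$. Writing $h_{\alpha\bar\beta}:=h(e_\alpha,e_\beta)$ and $H:=(h_{\alpha\bar\beta})_{1\le\alpha,\beta\le r}$, the orthogonality of $E$ and $G$ forces $h_{\alpha\bar\beta}=0$ whenever exactly one of $\alpha,\beta$ lies in $\{1,\dots,p\}$; hence $H$ is block diagonal, $H=\mathrm{diag}(H_E,H_G)$, where $H_E$ and $H_G$ are precisely the Gram matrices of $(E,h)$ and $(G,h)$ in the frames $(e_1,\dots,e_p)$ and $(e_{p+1},\dots,e_r)$.

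Next I would apply the standard formula for the Chern connection and curvature in a holomorphic frame: the connection matrix is $\theta=H^{-1}\partial H$ (in one common normalization) and the curvature matrix is $\Theta^{(F,h)}=\bar\partial\theta$. What matters is only that this is a fixed algebraic expression in $H$, $H^{-1}$, $\partial H$ followed by applying $\bar\partial$ entrywise. Since the product and the inverse of block-diagonal matrices are block diagonal, and $\partial,\bar\partial$ act entrywise, $H^{-1}\partial H$ and $\bar\partial(H^{-1}\partial H)$ are block diagonal with diagonal blocks $\bar\partial(H_E^{-1}\partial H_E)$ and $\bar\partial(H_G^{-1}\partial H_G)$, which are exactly the Chern curvature matrices of $(E,h)$ and $(G,h)$. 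Therefore, in this adapted frame, $\Theta^{(F,h)}$ preserves the decomposition $F=E\oplus G$ and restricts on $E$, resp. $G$, to $\Theta^{(E,h)}$, resp. $\Theta^{(G,h)}$. Since the splitting $F=E\oplus G$ and all curvature tensors are independent of the choice of frame, this yields the global identity $\Theta^{(F,h)}=\Theta^{(E,h)}\oplus\Theta^{(G,h)}$.

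An alternative, frame-free route is to verify directly that the direct-sum connection $D:=D^E\oplus D^G$ is the Chern connection of $(F,h)$: it is compatible with $h$ because $h(u,v)=h(u_E,v_E)+h(u_G,v_G)$ by orthogonality and $D^E,D^G$ are metric compatible, and its $(0,1)$-part equals $\bar\partial^F$ because $\bar\partial^F$ maps $E$-valued sections to $E$-valued forms and $G$-valued sections to $G$-valued forms; uniqueness of the Chern connection then gives $D^F=D^E\oplus D^G$, and squaring (noting this connection preserves the splitting) gives the curvature identity. I do not expect a serious obstacle in either approach: the only point requiring a word of care is the existence of a simultaneously holomorphic adapted frame — equivalently, that $\bar\partial^F$ respects the holomorphic subbundles $E$ and $G$ — which is immediate from the definition of a holomorphic subbundle; everything else is the routine block-diagonal bookkeeping for $\bar\partial(H^{-1}\partial H)$.
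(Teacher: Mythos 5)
Your proof is correct. Note, however, that the paper offers no proof of this lemma at all: it is quoted directly from Demailly (Theorem (14.5), Chapter V), where it appears as the special case of the second-fundamental-form decomposition $\Theta^{(F,h)}=\Theta^{(E,h)}-\beta^*\wedge\beta\oplus\cdots$ in which both summands are holomorphic and orthogonal, so that the second fundamental form $\beta$ vanishes. Both of your arguments are valid self-contained substitutes: the block-diagonal Gram-matrix computation and the uniqueness-of-the-Chern-connection argument each establish the identity, and the only point needing care (that the concatenated adapted frame is holomorphic, equivalently that $\bar\partial^F$ preserves each holomorphic subbundle) is handled correctly. One remark worth adding: the paper explicitly needs the lemma for bundles of \emph{infinite} rank (it is applied to the direct image bundle $E=E^k\oplus(E^k)^\perp$ in the proof of Theorem \ref{thm(extend):curvartue positive circular domains}), and of your two routes the frame-free one via uniqueness of the Chern connection is the one that extends to that setting without modification, whereas the finite block-matrix bookkeeping requires reinterpretation when $r=\infty$.
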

           Although the above lemma is originally stated for finite rank bundles, it can be easily generalized to infinite rank bundles.
        \subsection{$L^2$-estimates in complex analysis}\

        In this subsection, we collect some well known $L^2$-estimates in complex analysis that will be used later.
        \begin{lem}[{\cite[Lemma 1.6.4]{Ber95}}]\label{thm:Berndtsson L^2 estimate}
         Let $\Omega\subset \mc^n$ be a pseudoconvex domain, let $\varphi$ be a plurisubharmonic function on $\Omega,$ and let $\psi\in C^2(\Omega)$ be a  strictly plurisubharmonic function. Then for any $\bar{\partial}$-closed measurable $(0,1)$-form $f:=\sum_{j=1}^{n}f_jd\bar z_j$ on $\Omega$, we can solve $\bar\partial u=f$ with the estimate
        $$\int_{\Omega}|u|^2 e^{-\varphi-\psi}d\lambda \leq\int_{\Omega}\sum_{j,k=1}^{n}\psi^{j\bar k}f_j\bar{f_k}e^{-\varphi-\psi}d\lambda,$$
        provided the right hand side is finite.
        \end{lem}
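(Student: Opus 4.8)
The plan is to reduce the statement to H\"ormander's classical $L^2$-estimate \cite{Hor65} with a \emph{smooth} strictly plurisubharmonic weight, by regularizing $\varphi$, and then to exploit the elementary observation that for the weight $\varphi+\psi$ the complex Hessian governing the estimate dominates the complex Hessian of $\psi$ alone. Concretely, I would first exhaust $\Omega$ by smoothly bounded strictly pseudoconvex open sets $\Omega_1\Subset\Omega_2\Subset\cdots$ with $\bigcup_\nu\Omega_\nu=\Omega$ (sublevel sets of a smooth strictly plurisubharmonic exhaustion, at regular values), and for $\epsilon>0$ take the mollification $\varphi_\epsilon:=\varphi*\rho_\epsilon$, which on $\{z:\operatorname{dist}(z,\partial\Omega)>\epsilon\}$ is smooth, plurisubharmonic, satisfies $\varphi_\epsilon\ge\varphi$, and decreases to $\varphi$ as $\epsilon\downarrow 0$; hence $\varphi_\epsilon+\psi$ is smooth and strictly plurisubharmonic on a neighbourhood of $\overline{\Omega_\nu}$ once $\epsilon$ is small.

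Next I would apply H\"ormander's theorem on $\Omega_\nu$ with the smooth strictly plurisubharmonic weight $\varphi_\epsilon+\psi$ to the $\bar\partial$-closed form $f$, producing $u_{\nu,\epsilon}\in L^2(\Omega_\nu)$ with $\bar\partial u_{\nu,\epsilon}=f$ and
\[
\int_{\Omega_\nu}|u_{\nu,\epsilon}|^2e^{-\varphi_\epsilon-\psi}\,d\lambda\ \le\ \int_{\Omega_\nu}\sum_{j,k=1}^n(\varphi_\epsilon+\psi)^{j\bar k}f_j\bar f_k\,e^{-\varphi_\epsilon-\psi}\,d\lambda .
\]
Since $\varphi_\epsilon$ is plurisubharmonic, $\bigl((\varphi_\epsilon+\psi)_{j\bar k}\bigr)\ge(\psi_{j\bar k})$ as Hermitian matrices, so the inverses satisfy $\bigl((\varphi_\epsilon+\psi)^{j\bar k}\bigr)\le(\psi^{j\bar k})$ and therefore $\sum_{j,k}(\varphi_\epsilon+\psi)^{j\bar k}f_j\bar f_k\le\sum_{j,k}\psi^{j\bar k}f_j\bar f_k$ pointwise; together with $e^{-\varphi_\epsilon-\psi}\le e^{-\varphi-\psi}$ this bounds the right-hand side by $M:=\int_\Omega\sum_{j,k}\psi^{j\bar k}f_j\bar f_k\,e^{-\varphi-\psi}\,d\lambda<\infty$, uniformly in $\nu$ and $\epsilon$.

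Then I would pass to the limit by a standard two-step diagonal procedure. Fixing $\nu$, the family $\{u_{\nu,\epsilon}\}_\epsilon$ is bounded in $L^2(\Omega_\nu)$ (the weight $e^{-\varphi_\epsilon-\psi}$ stays above a positive constant there), so a weak limit $u_\nu$ exists along some $\epsilon\downarrow 0$, satisfies $\bar\partial u_\nu=f$ on $\Omega_\nu$, and — using $\varphi_\epsilon\le\varphi_{\epsilon_0}$ for $\epsilon<\epsilon_0$, lower semicontinuity of $v\mapsto\int_{\Omega_\nu}|v|^2e^{-\varphi_{\epsilon_0}-\psi}$ under weak convergence, and then monotone convergence as $\epsilon_0\downarrow 0$ — obeys $\int_{\Omega_\nu}|u_\nu|^2e^{-\varphi-\psi}\,d\lambda\le M$. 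Repeating in $\nu$ (the restrictions $u_\nu|_{\Omega_{\nu_0}}$ being uniformly $L^2$-bounded) yields $u\in L^2_{\mathrm{loc}}(\Omega)$ with $\bar\partial u=f$ and, via lower semicontinuity on each $\Omega_{\nu_0}$ followed by monotone convergence, $\int_\Omega|u|^2e^{-\varphi-\psi}\,d\lambda\le M$, which is the claimed estimate.

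The only genuinely nontrivial ingredient is H\"ormander's theorem invoked above, whose proof rests on the Bochner--Kodaira--Nakano--H\"ormander a priori inequality
\[
\|\bar\partial\alpha\|^2+\|\bar\partial^*\alpha\|^2\ \ge\ \int_{\Omega_\nu}\sum_{j,k}(\varphi_\epsilon+\psi)_{j\bar k}\,\alpha_j\bar\alpha_k\,e^{-\varphi_\epsilon-\psi}\,d\lambda
\]
for $(0,1)$-forms $\alpha$ in the domain of $\bar\partial^*$ (the boundary term being $\ge 0$ by pseudoconvexity of $\Omega_\nu$ together with the domain condition on $\alpha$), the density of smooth forms in the graph norm, and the Hahn--Banach/Riesz duality argument that turns the bound $|\langle f,\alpha\rangle|^2\le M\,\|\bar\partial^*\alpha\|^2$ — itself obtained by splitting $\alpha$ into its $\ker\bar\partial$ and $(\ker\bar\partial)^\perp$ components and applying the pointwise Cauchy--Schwarz inequality in the metric $(\psi_{j\bar k})$ — into the desired solution. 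I expect establishing this a priori inequality with the correct sign of the boundary term to be the main obstacle in a self-contained treatment; everything else is bookkeeping, and since the statement is precisely \cite[Lemma 1.6.4]{Ber95}, one may alternatively just cite it.
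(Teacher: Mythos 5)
The paper offers no proof of this lemma: it is imported as a black box from \cite[Lemma 1.6.4]{Ber95}, so there is no internal argument to compare against. Your reconstruction is correct and is essentially the standard derivation of that result from H\"ormander's theorem: exhaust by smoothly bounded strictly pseudoconvex $\Omega_\nu$, replace $\varphi$ by the decreasing smooth plurisubharmonic regularizations $\varphi_\epsilon\geq\varphi$, apply the smooth-weight estimate with weight $\varphi_\epsilon+\psi$, and use operator monotonicity of inversion ($A\geq B>0$ implies $A^{-1}\leq B^{-1}$) together with $e^{-\varphi_\epsilon}\leq e^{-\varphi}$ to get a bound uniform in $\nu$ and $\epsilon$; the two-step weak-limit and the lower semicontinuity/monotone convergence bookkeeping all go through. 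Two minor points deserve a sentence in a written version. First, before invoking H\"ormander on $\Omega_\nu$ you need $f\in L^2(\Omega_\nu)$; this does follow from finiteness of the right-hand side, because $(\psi^{j\bar k})$ is continuous and positive definite and $\varphi,\psi$ are bounded above on $\overline{\Omega_\nu}$, but it should be said. Second, in the final limit over $\nu$ the weight $e^{-\varphi-\psi}$ may be unbounded on $\Omega_{\nu_0}$ (where $\varphi\to-\infty$), so the weak lower semicontinuity there should be run with truncated weights $\min\{e^{-\varphi-\psi},N\}$ and then $N\to\infty$ --- the same device you already employ, via $e^{-\varphi_{\epsilon_0}-\psi}$, at the $\epsilon$ stage. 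Your closing paragraph re-deriving H\"ormander's theorem from the Bochner--Kodaira inequality is unnecessary once that theorem is cited; note also that Berndtsson's own proof operates at that level, keeping the single weight $\varphi+\psi$ in the a priori inequality and discarding the nonnegative $i\partial\bar\partial\varphi$ term before the Cauchy--Schwarz step, which is the same idea as your matrix inequality packaged one stage earlier.
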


        \begin{lem}[{\cite[Theorem 1]{Blo13}}]\label{thm:extension}
         Let $0\in \Delta\subset \mc$ be a bounded domain, let $g_\Delta(\cdot,z)$ be the (negative) Green function for $\Delta$
         with  pole at $z,$ and let $\Omega\subset \Delta\times \mc^{n}$ be a pseudoconvex domain. Then for any plurisubharmonic function $\varphi$ on $\Omega,$ holomorphic function $f$ on $H:=\{(z_1,\cdots,z_{n+1})\in \Omega|\ z_1=0\},$ there exists $F\in\mo(\Omega)$ such that
         $F|_{\Omega'}=f$ and
         $$\int_{\Omega}|F|^2e^{-\varphi}d\lambda\leq \frac{\pi}{(c_\Delta(0))^2}\int_{H}|f|^2e^{-\varphi}d\lambda,$$
         provided the right hand side is finite, where
         $$c_\Delta(0):=\exp(\lim_{z\rightarrow 0}(g_\Delta(z,0)-\log|z|)).$$
        \end{lem}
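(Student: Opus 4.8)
We outline B\l ocki's ``ODE method'' (cf.\ also \cite{GZ15}). First reduce to a smooth situation: exhaust $\Omega$ by relatively compact smoothly bounded pseudoconvex domains $\Omega_j\Subset\Omega$ with $\bigcup_j\Omega_j=\Omega$, replace $\varphi$ by a decreasing sequence of smooth plurisubharmonic functions defined near $\overline{\Omega_j}$, and note that it suffices to produce, for each $j$, an extension $F_j\in\mo(\Omega_j)$ of $f|_{H\cap\Omega_j}$ obeying the stated bound with a constant independent of $j$; a weak-$L^2$ limit of a subsequence of $(F_j)$ is then the desired $F$, by a normal-families argument. So from now on assume $\Omega$ bounded with smooth strictly pseudoconvex boundary and $\varphi\in C^\infty(\overline\Omega)$.

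Let $\pi_1\colon\mc\times\mc^n\to\mc$ be the first projection and put $G:=g_\Delta(\pi_1(\cdot),0)$, a negative plurisubharmonic function on $\Omega$ which is \emph{pluriharmonic} on $\Omega\setminus H$ (because $g_\Delta(\cdot,0)$ is harmonic on $\Delta\setminus\{0\}$), and which by the very definition of $c_\Delta(0)$ satisfies
$$\frac{e^{2G(z)}}{|z_1|^2}\longrightarrow c_\Delta(0)^2\qquad\text{as }z_1\to 0.$$
The function $G$ will play two roles: as a weight, $e^{-2G}\sim c_\Delta(0)^{-2}|z_1|^{-2}$ along $H$, so a holomorphic function square-integrable against $e^{-2G}$ automatically vanishes on $H$; and $G$ is the ``intrinsic coordinate'' in which the capacity constant $c_\Delta(0)$ enters.

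Next, fix a smooth almost-extension $\tilde f$ of $f$, i.e.\ $\tilde f\in C^\infty$ near $\overline\Omega$ with $\tilde f|_H=f$ and $\dbar\tilde f\equiv 0$ on a fixed neighborhood of $H$ (patch local holomorphic extensions with a partition of unity, then cut off toward $H$). Truncate it as $\chi(-2G)\tilde f$ for a smooth cutoff $\chi$; then $\alpha:=\dbar\bigl(\chi(-2G)\tilde f\bigr)$ is a $\dbar$-closed $(0,1)$-form proportional to $\chi'(-2G)\,\tilde f\,\dbar G$. Solve $\dbar v=\alpha$ on the pseudoconvex domain $\Omega\setminus H$ against a weight $e^{-\varphi-2G-\psi(-2G)}$, where $\psi$ is a convex auxiliary function; this is legitimate via a twisted Donnelly--Fefferman--Berndtsson estimate, or directly via Lemma \ref{thm:Berndtsson L^2 estimate} applied with the plurisubharmonic weight $\varphi+2G$ and the strictly plurisubharmonic function $\psi(-2G)+\eta|z|^2$ (letting $\eta\to 0$ at the end). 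Crucially, $2G$ is pluriharmonic off $H$, hence contributes nothing to the curvature, so the estimate goes through with the degenerate metric $i\ddbar\psi(-2G)$ even though $\varphi$ alone carries no positivity; and since $\alpha$ points in the $\dbar G$-direction, the inverse appearing on the right-hand side is the scalar $\bigl(\text{const}\bigr)\chi'(-2G)^2|\tilde f|^2/\psi''(-2G)$. The singular factor $e^{-2G}$ forces $v|_H=0$, so $F:=\chi(-2G)\tilde f-v$ extends holomorphically across $H$ with $F|_H=f$; and because $v$ is the \emph{minimal} solution, $\chi(-2G)\tilde f=F+v$ is an orthogonal splitting in the weighted $L^2$-space, which avoids losing a constant to a crude triangle inequality when one returns to the $e^{-\varphi}$-norm.

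The decisive step---and the one I expect to be the real obstacle---is the coupled choice of $\chi$ and $\psi$: neither can be of any simple closed form (a linear $\psi$ makes $\psi''$ vanish, while a strictly convex one makes the right-hand side tend to $0$), so they must be selected through an ordinary differential equation, with boundary conditions imposed at $G=0$ and as $G\to-\infty$, tuned precisely so that every Cauchy--Schwarz inequality inside the twisted Bochner--Kodaira identity becomes asymptotically sharp as $\eta\to 0$ and the cutoff is optimized. Carrying this out, the right-hand side of the estimate converges to
$$\frac{\pi}{c_\Delta(0)^2}\int_H|f|^2e^{-\varphi}\,d\lambda,$$
the factor $\pi$ arising from the angular integration in $z_1$ and the factor $c_\Delta(0)^{-2}$ from $e^{2G}\sim c_\Delta(0)^2|z_1|^2$ near $H$; undoing the initial reductions then completes the proof. (An alternative closer to the viewpoint of this paper would be to derive the estimate from the plurisubharmonic variation of Bergman kernels / positivity of direct images, together with a degeneration of $\Omega$ onto $\Delta\times H$, as in the Berndtsson--Lempert approach; the ODE method above, however, uses only $\dbar$-estimates already recalled here.)
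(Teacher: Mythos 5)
The paper does not prove this statement at all: it is quoted verbatim as \cite[Theorem 1]{Blo13} and used as a black box in the proof of Theorem \ref{thm:general optimal2}, so there is no internal proof to compare yours against. Judged on its own terms, your sketch is a faithful roadmap of B\l ocki's actual argument: the reduction to smooth strictly pseudoconvex data, the pluriharmonic weight $G=g_\Delta(z_1,0)$ whose singularity $e^{-2G}\sim c_\Delta(0)^{-2}|z_1|^{-2}$ both forces the correction term to vanish on $H$ and injects the capacity constant, the cutoff $\chi(-2G)$, and a Donnelly--Fefferman/Berndtsson-type twisted estimate on the pseudoconvex set $\Omega\setminus H$ in which only the rank-one form $i\partial\bar\partial\psi(-2G)$ carries positivity. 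All of that is correctly identified.

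The genuine gap is the one you flag yourself: the entire content of the sharp constant $\pi/c_\Delta(0)^2$ lives in the coupled choice of $\chi$ and $\psi$, and you do not produce it. Without exhibiting the specific functions (in \cite{Blo13} they solve an explicit ODE system with boundary conditions at $G=0^-$ and as $G\to-\infty$, tuned so that the Cauchy--Schwarz losses in the twisted Bochner--Kodaira inequality degenerate to equalities), one cannot verify that the right-hand side converges to $\frac{\pi}{c_\Delta(0)^2}\int_H|f|^2e^{-\varphi}d\lambda$ rather than to that quantity times a constant larger than $1$; a non-optimal constant is routine, the optimal one is the theorem. Two smaller points: the remark that minimality of $v$ gives an orthogonal splitting is not used in B\l ocki's proof and is not needed (one only needs the a priori estimate for some solution of $\bar\partial v=\alpha$ with the correct weight); and in the limit $\eta\to 0$ you should justify, not merely assert, that the right-hand side stays bounded --- this works because $\alpha$ is proportional to $\bar\partial G$, so $\langle(i\partial\bar\partial\psi(-2G)+\eta\omega)^{-1}\alpha,\alpha\rangle$ is controlled by $|\chi'(-2G)|^2|\tilde f|^2/\psi''(-2G)$ uniformly in $\eta$, and this is exactly the step where a degenerate-metric estimate must be made precise. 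In short: accurate architecture, but the proof of the sharp constant is deferred rather than given, so as it stands the proposal establishes at best the non-optimal version of the lemma.
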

        A fundamental result about the $L^2$-estimate of $\bar\partial$ for a Hermitian holomorphic vector bundle is the following, which is due to H\"ormander and Demailly.
		\begin{lem}[{\cite[Theorem 6.1, Chapter VIII]{Dem}}]\label{lem: L2 estimate Nakano}
         Let $X$ be a complete K\"{a}hler manifold with a K\"{a}hler metric $\omega$ which is not necessarily complete. Let $(E,h^E)$ be a  holomorphic vector bundle over $X$ with a Nakano positive Hermitian metric $h^E$, and define the  operator $A:=i\Theta^{(E,h^E)}\wedge\Lambda_\omega$ acts on $\Lambda^{n,q}T^{*}X\otimes E$ ($\Lambda_\omega$ is the adjoint operator of the $E$-valued Lefschetz operator,
         $ q\geq 1$). Then for any $v\in L^2(X,\Lambda^{n,q}T^{*}X\otimes E)$ satisfying $\bar\partial v=0$, there exists $u\in  L^2(X, \Lambda^{n,q-1}T^*X\otimes E)$ such that $\bar\partial u=v$ and
        $$
        \int_X|u|^2_{h^E}d\lambda(\omega)\leq \int_X\langle A^{-1}v,v\rangle_{h^E}d\lambda(\omega),
        $$
        provided the right hand side is finite.
		\end{lem}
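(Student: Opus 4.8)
The plan is to reprove this classical H\"ormander--Demailly estimate along the standard route: H\"ormander's $L^2$ functional-analytic solvability criterion for $\bar\partial$, fed by the a priori estimate coming from the Bochner--Kodaira--Nakano identity on K\"ahler manifolds, preceded by a regularization of the metric to cope with the fact that $\omega$ itself need not be complete. Throughout, the restriction to $E$-valued $(n,q)$-forms is what makes the positivity term clean: if $D'$ denotes the $(1,0)$-part of the Chern connection of $(E,h^E)$, then $D'\alpha$ is an $(n+1,q)$-form on the $n$-dimensional manifold $X$ and hence vanishes, so one side of the Bochner--Kodaira--Nakano identity becomes manifestly nonnegative.

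First I would reduce to the case that $\omega$ is complete. By hypothesis $X$ carries some complete K\"ahler metric $\hat\omega$; put $\omega_\varepsilon:=\omega+\varepsilon\hat\omega$ for $\varepsilon>0$, so each $\omega_\varepsilon$ is a complete K\"ahler metric. Writing $A_\varepsilon:=i\Theta^{(E,h^E)}\wedge\Lambda_{\omega_\varepsilon}$, one has, for $E$-valued $(n,q)$-forms and $\omega_\varepsilon\ge\omega$, the pointwise monotonicities
\[
\langle A_\varepsilon^{-1}v,v\rangle_{h^E}\,d\lambda(\omega_\varepsilon)\ \le\ \langle A^{-1}v,v\rangle_{h^E}\,d\lambda(\omega),\qquad |u|_{h^E}^{2}\,d\lambda(\omega_\varepsilon)\ \le\ |u|_{h^E}^{2}\,d\lambda(\omega),
\]
which are elementary linear-algebra facts about $[\,i\Theta^{(E,h^E)},\Lambda_{\omega_\varepsilon}\,]$ acting on $(n,\bullet)$-forms. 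Hence it is enough to solve $\bar\partial u_\varepsilon=v$ with $\int_X|u_\varepsilon|^{2}_{h^E}\,d\lambda(\omega_\varepsilon)\le\int_X\langle A^{-1}v,v\rangle_{h^E}\,d\lambda(\omega)=:C$ for each fixed $\varepsilon$; the family $\{u_\varepsilon\}$ is then bounded in $L^2_{\mathrm{loc}}$, a weak limit $u$ along $\varepsilon\to0$ satisfies $\bar\partial u=v$, and the estimate $\int_X|u|^2_{h^E}\,d\lambda(\omega)\le C$ follows from weak lower semicontinuity of the norm on compact sets together with the second monotonicity above.

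With $\omega$ complete, the heart of the matter is the a priori inequality
\[
\|\bar\partial\alpha\|^{2}+\|\bar\partial^{*}\alpha\|^{2}\ \ge\ \int_X\langle A\alpha,\alpha\rangle_{h^E}\,d\lambda(\omega),\qquad\alpha\in C^\infty_c(X,\Lambda^{n,q}T^*X\otimes E),
\]
which I would get by integrating the Bochner--Kodaira--Nakano identity $\Delta''_E=\Delta'_E+[\,i\Theta^{(E,h^E)},\Lambda_\omega\,]$ and discarding the nonnegative term $\langle\Delta'_E\alpha,\alpha\rangle=\|(D')^{*}\alpha\|^2\ge 0$; Nakano positivity of $(E,h^E)$ guarantees $A>0$. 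Completeness of $\omega$ lets me propagate this inequality from $C^\infty_c$ to all of $\mathrm{Dom}(\bar\partial)\cap\mathrm{Dom}(\bar\partial^{*})$ by the usual Gaffney cut-off/density argument. Then comes H\"ormander's duality step: for $\alpha\in\mathrm{Dom}(\bar\partial^{*})$ decompose $\alpha=\alpha_1+\alpha_2$ with $\alpha_1\in\ker\bar\partial$ and $\alpha_2\perp\ker\bar\partial$; since $\mathrm{Im}\,\bar\partial\subset\ker\bar\partial$ one gets $\bar\partial^{*}\alpha_2=0$, and $\langle v,\alpha\rangle=\langle v,\alpha_1\rangle$ because $\bar\partial v=0$, while the inequality applied to $\alpha_1$ gives $\|\bar\partial^{*}\alpha_1\|^2\ge\int_X\langle A\alpha_1,\alpha_1\rangle_{h^E}\,d\lambda(\omega)$. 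Cauchy--Schwarz for the positive operator $A$ then yields $|\langle v,\alpha\rangle|^2\le C\,\|\bar\partial^{*}\alpha\|^2$ with $C=\int_X\langle A^{-1}v,v\rangle_{h^E}\,d\lambda(\omega)$, and the standard Riesz/Hahn--Banach solvability lemma produces $u$ with $\bar\partial u=v$ and $\|u\|^2\le C$. Combined with the first step, this gives the theorem.

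The Bochner--Kodaira--Nakano identity, the Gaffney density lemma, and H\"ormander's solvability criterion are all routine. I expect the genuine technical work to lie in the regularization step: verifying the two pointwise monotonicities in $\varepsilon$ (which are special to the $(n,\bullet)$ case and are what keeps the sharp constant from deteriorating), and then justifying the passage to the weak limit as $\varepsilon\to0$ so that $\int_X|u|^2_{h^E}\,d\lambda(\omega)\le\int_X\langle A^{-1}v,v\rangle_{h^E}\,d\lambda(\omega)$ survives exactly.
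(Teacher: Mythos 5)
This lemma is not proved in the paper at all: it is quoted verbatim from Demailly's book \cite[Theorem 6.1, Chapter VIII]{Dem} and used as a black box, and your argument is exactly the classical proof given in that reference (regularization by $\omega_\varepsilon=\omega+\varepsilon\hat\omega$ together with the monotonicity lemma for $(n,q)$-forms, the Bochner--Kodaira--Nakano inequality propagated by Gaffney's density argument under completeness, H\"ormander's duality step, and a weak limit as $\varepsilon\to 0$). The proposal is correct; the only point stated loosely is the final passage to the limit, where one should fix $\delta>0$, use $\int_X|u_\varepsilon|^2_{\omega_\delta}\,d\lambda(\omega_\delta)\le\int_X|u_\varepsilon|^2_{\omega_\varepsilon}\,d\lambda(\omega_\varepsilon)\le C$ for $\varepsilon\le\delta$ to extract a weak limit bounded uniformly in each $L^2(\omega_\delta)$, and then let $\delta\to 0$ by monotone convergence to recover the estimate in the metric $\omega$.
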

			The following result of Deng-Ning-Wang-Zhou shows that the converse of the above Lemma also holds,
		and hence gives an equivalent integral form characterization of the curvature positivity of Hermitian holomorphic vector bundles.
			\begin{lem}[{\cite[Theorem 1.1]{DNWZ}}]\label{lem: optimal L2-estimate}
			Let $U\subset \mc^n$ be a bounded domain with the standard K\"ahler metric $\omega:=i \sum_{j=1}^n dz_j \wedge d\bar{z}_j$, $(E, h)$ be a Hermitian holomorphic vector bundle over $U$
			with smooth Hermitian metric $h$,
			and let $\theta \in C^0 (U, \wedge^{1,1}T^*U\otimes \operatorname{End}(E))$ with $\theta^*=\theta$.
			If for any strictly plurisubharmonic function $\psi$ on $U$ and $v \in C_c^{\infty} (U, \wedge^{n,1}T^*U \otimes E )$ with $\bar{\partial}f=0$
			and $i\partial\bar\partial\psi\otimes \operatorname{Id}_E + \theta> 0$ on $\operatorname{supp}(v)$ (support of $v$),
			there is a measurable section $u$ of $\wedge^{n,0}T^*U \otimes E $, satisfying $\bar{\partial}u=v$ and
			\begin{equation}
				\int_U |u|_h^2e^{-\psi} d\lambda(z) \leq \int_U \langle B_{i\partial\bar\partial\psi, \theta}^{-1}v, v\rangle_{h}e^{-\psi} d\lambda(z),
			\end{equation}
			provided that the right hand side is finite, then $i\Theta^{E} \geq \theta$ in the sense of Nakano, where
			$$B_{i\partial\bar\partial\psi,\theta} := [i\partial\bar\partial\psi\otimes \operatorname{Id}_E + \theta, \Lambda_\omega].$$
		\end{lem}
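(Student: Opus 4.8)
The plan is to prove the Nakano inequality pointwise. Since $i\Theta^E\ge\theta$ in the sense of Nakano is equivalent to
$\sum_{j,k}\langle\big((i\Theta^E)_{j\bar k}-\theta_{j\bar k}\big)(p)w_j,w_k\rangle_h\ge 0$ for every $p\in U$ and every $n$-tuple $w=(w_1,\dots,w_n)$ with $w_j=\sum_\lambda w_{j\lambda}e_\lambda(p)\in E_p$ (using the standard fact that on an $(n,1)$-form $dz\wedge\sum_j d\bar z_j\otimes s_j$ the operator $[\,\cdot\,,\Lambda_\omega]$ pairs indices as $\sum_{j,k}\langle(\cdot)_{j\bar k}s_j,s_k\rangle_h$), I argue by contradiction: assume $Q:=\sum_{j,k}\langle(i\Theta^E)_{j\bar k}(p)w_j,w_k\rangle_h<\widetilde Q:=\sum_{j,k}\langle\theta_{j\bar k}(p)w_j,w_k\rangle_h$ for some $p,w$. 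Translating so $p=0$ (which leaves $\omega=i\sum dz_l\wedge d\bar z_l$ unchanged), I pick a holomorphic frame $e_1,\dots,e_r$ of $E$ near $0$ that is normal at $0$, so $h_{\lambda\bar\mu}(0)=\delta_{\lambda\mu}$, $dh(0)=0$ and (up to the standard normalization) $(i\Theta^E)_{j\bar k}(0)=-\big(\partial_{z_j}\partial_{\bar z_k}h_{\lambda\bar\mu}(0)\big)_{\lambda\mu}$; writing $g_j:=\sum_\lambda w_{j\lambda}e_\lambda$ for the holomorphic section with $g_j(0)=w_j$, this reads $\partial_{z_j}\partial_{\bar z_k}\langle g_j,g_k\rangle_h(0)=-\langle(i\Theta^E)_{j\bar k}(0)w_j,w_k\rangle_h$.

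The idea is to feed the hypothesis a one–parameter family of test data concentrating at $0$ at scale $N^{-1/2}$ and read off the inequality from the leading asymptotics as $N\to\infty$. Take $\psi=\psi_N:=N|z|^2$, which is globally strictly plurisubharmonic on $U$ with $i\partial\bar\partial\psi_N=N\omega$; since $[\omega\otimes\operatorname{Id}_E,\Lambda_\omega]$ acts as the identity on $\wedge^{n,1}T^*U\otimes E$, one has $B_N:=B_{i\partial\bar\partial\psi_N,\theta}=N\operatorname{Id}+[\theta,\Lambda_\omega]$, hence $B_N^{-1}=\frac1N\operatorname{Id}-\frac1{N^2}[\theta,\Lambda_\omega]+O(N^{-3})$ for $N$ large. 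Fixing a cut-off $\chi$ equal to $1$ near $0$ and supported in a small ball on which the frame lives, set $\alpha:=\chi\big(\sum_j\bar z_j g_j\big)\otimes dz$ and $v:=\bar\partial\alpha\in C_c^\infty(U,\wedge^{n,1}T^*U\otimes E)$; then $v=(-1)^n\sum_j dz\wedge d\bar z_j\otimes g_j$ near $0$ and is supported where $e^{-\psi_N}$ is exponentially small otherwise. For $N$ large, $N\omega\otimes\operatorname{Id}_E+\theta>0$ on $\operatorname{supp}(v)$, so the hypothesis produces $u$ with $\bar\partial u=v$ and $\int_U|u|_h^2e^{-\psi_N}\,d\lambda\le R_N:=\int_U\langle B_N^{-1}v,v\rangle_h e^{-\psi_N}\,d\lambda<\infty$.

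Since $u-\alpha$ is then an $L^2(e^{-\psi_N})$ holomorphic $(n,0)$-form, $u$ lies in $\alpha+\mathcal H_N$ with $\mathcal H_N$ the (closed) space of such forms, so $\|\alpha\|_{\psi_N}^2-\|P_N\alpha\|_{\psi_N}^2=\inf_{s\in\mathcal H_N}\|\alpha+s\|^2_{\psi_N}\le\|u\|^2_{\psi_N}\le R_N$, where $P_N$ is the orthogonal projection onto $\mathcal H_N$. Both sides are then evaluated by Laplace's method, using $\int_{\mc^n}z^a\bar z^b e^{-N|z|^2}d\lambda=\delta_{ab}\,a!\,\pi^n/N^{n+|a|}$, Taylor expansion of the metric about $0$, and discarding the $O(e^{-cN})$ tails from $\chi$; with $C_1:=\sum_{j,l}\partial_{z_l}\partial_{\bar z_l}\langle g_j,g_j\rangle_h(0)$ I expect
$$\|\alpha\|_{\psi_N}^2=\frac{\pi^n}{N^{n+1}}\sum_j|w_j|^2+\frac{\pi^n}{N^{n+2}}\big(C_1-Q\big)+O(N^{-n-3}),$$
$$R_N=\frac{\pi^n}{N^{n+1}}\sum_j|w_j|^2+\frac{\pi^n}{N^{n+2}}\big(C_1-\widetilde Q\big)+O(N^{-n-3}),$$
the $-Q/N^2$ term on the left being exactly the contribution of the mixed derivatives $\partial_{z_j}\partial_{\bar z_k}\langle g_j,g_k\rangle_h(0)=-\langle(i\Theta^E)_{j\bar k}(0)w_j,w_k\rangle_h$, and the $-\widetilde Q/N^2$ term on the right coming from $-[\theta,\Lambda_\omega]/N^2$ via $\langle[\theta,\Lambda_\omega]v,v\rangle_h=\sum_{j,k}\langle\theta_{j\bar k}g_j,g_k\rangle_h$ near $0$. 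Provided $\|P_N\alpha\|_{\psi_N}^2=o(N^{-n-2})$, substituting into $\|\alpha\|_{\psi_N}^2-\|P_N\alpha\|_{\psi_N}^2\le R_N$, cancelling the $N^{-n-1}$ and $C_1$ terms, multiplying by $N^{n+2}/\pi^n$ and letting $N\to\infty$ yields $\widetilde Q\le Q$, contradicting $Q<\widetilde Q$. Hence $i\Theta^E\ge\theta$ in the sense of Nakano.

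The main obstacle is precisely the estimate $\|P_N\alpha\|_{\psi_N}^2=o(N^{-n-2})$: a priori the holomorphic projection of $\alpha$ is only $O(N^{-n-1})$, the same order as the leading term, so the curvature term could be swamped. The way around it is that for any holomorphic $g=\gamma\otimes dz\in\mathcal H_N$, expanding $\langle\alpha,g\rangle_{\psi_N}$ and Taylor-expanding $\langle g_j,\gamma\rangle_h$ about $0$, the leading coefficient — that of $z_j$ with no antiholomorphic factor — vanishes because $\partial_z h(0)=0$ (normal frame) and $\bar\gamma$ has no holomorphic part; combining the remaining terms with the Cauchy/Bargmann--Fock bounds $|\partial^a\gamma(0)|\lesssim N^{(n+|a|)/2}\|\gamma\|_{L^2(e^{-\psi_N})}$ gives $|\langle\alpha,g\rangle_{\psi_N}|\le\varepsilon_N\|g\|_{\psi_N}$ with $\varepsilon_N=O(N^{-(n+3)/2})$, whence $\|P_N\alpha\|_{\psi_N}^2\le\varepsilon_N^2=O(N^{-n-3})$. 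The rest is routine bookkeeping: uniform control of the Taylor remainders in the two Laplace expansions, and matching sign and normalization conventions so that the curvature operator of Definition \ref{Curvature positivity of Berndtsson} appears on the correct side. An alternative that sidesteps the minimal solution is to dualize the solvability-with-optimal-estimate hypothesis (H\"ormander's functional-analytic lemma) into an a priori inequality for compactly supported $(n,1)$-forms and combine it with the Bochner--Kodaira--Nakano identity, but the analytic core — localizing at $p$ at scale $N^{-1/2}$ and extracting the quadratic curvature form — is the same.
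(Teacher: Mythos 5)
The paper does not prove this lemma: it is imported verbatim as \cite[Theorem 1.1]{DNWZ} and used as a black box, so there is no internal proof to compare against. Your argument is, in substance, the proof given in that reference: argue by contradiction at a point $p$ where the Nakano inequality fails for some $n$-tuple, pass to a normal frame at $p$, feed the hypothesis the weight $\psi_N=N|z|^2$ and the test form $v=\bar\partial\bigl(\chi\sum_j\bar z_j g_j\otimes dz\bigr)$, use that any $L^2$ solution satisfies $\|u\|_{\psi_N}^2\ge\|\alpha\|_{\psi_N}^2-\|P_N\alpha\|_{\psi_N}^2$, and read off the curvature form from the $N^{-n-2}$ coefficient of the two Laplace expansions. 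Your expansions are correct (the pure $zz$ and $\bar z\bar z$ second-order terms of $h$ drop out by bidegree counting in the Gaussian moments, and $\theta\in C^0$ suffices since it only enters at top order), and you correctly isolate the one genuinely delicate step, namely that $\|P_N\alpha\|_{\psi_N}^2$ must be $o(N^{-n-2})$ although the naive bound is only $O(N^{-n-1})$; your resolution --- the purely holomorphic linear term of $\langle g_j,\gamma\rangle_h$ vanishes because $\partial h(p)=0$ in the normal frame and $\bar\gamma$ is antiholomorphic, after which the Bergman/Cauchy bounds $|\partial^a\gamma(p)|\lesssim N^{(n+|a|)/2}\|\gamma\|_{\psi_N}$ give $|\langle\alpha,g\rangle_{\psi_N}|=O(N^{-(n+3)/2})\|g\|_{\psi_N}$ --- is exactly the right mechanism. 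The only items to make explicit in a full write-up are routine: that $N\omega\otimes \operatorname{Id}_E+\theta>0$ holds on all of $\operatorname{supp}(v)$ (including the annulus where $\bar\partial\chi\neq 0$) for $N$ large, that the right-hand side of the hypothesis is finite, and that the identification $(i\Theta^E)_{j\bar k}(p)=-\bigl(\partial_{z_j}\partial_{\bar z_k}h_{\lambda\bar\mu}(p)\bigr)_{\lambda\mu}$ uses the vanishing of the first derivatives of $h$ at $p$. None of these affects the validity of the argument.
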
	
		
         Let us also note that the following simple lemma.
        \begin{lem}[{\cite[Theorem 5.2, Chapter VIII]{Dem}}]\label{thm:complete}
         Let $\Omega\subset \mc^n$ be a pseudoconvex domain, then $\Omega$ is a complete K\"ahler manifold.
        \end{lem}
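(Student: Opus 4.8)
The plan is to exhibit an explicit complete Kähler metric on $\Omega$ built from a plurisubharmonic exhaustion function, following the classical recipe of \cite[Chapter VIII]{Dem}. By the definition of pseudoconvexity adopted in Section~\ref{sec:preliminary}, there is a smooth plurisubharmonic function $p\colon\Omega\to\mr$ whose sublevel sets $\{p\le c\}$ are relatively compact in $\Omega$ for every $c\in\mr$. Since $p$ attains its infimum on one such compact sublevel set, it is bounded below, and after replacing $p$ by $p-\inf_\Omega p$ we may assume $p\ge 0$ (this affects neither plurisubharmonicity nor the exhaustion property). Now set
$$F:=|z|^2+p^2,\qquad \hat\omega:=i\ddbar F.$$
First I would check that $\hat\omega$ is a Kähler metric: it is $i\ddbar$ of a smooth function, hence a closed real $(1,1)$-form; and since
$$i\ddbar p^2=2p\,i\ddbar p+2\,i\partial p\wedge\dbar p\ \ge\ 0$$
(using $p\ge 0$ and $i\ddbar p\ge 0$), we have $\hat\omega\ge i\ddbar|z|^2=i\sum_{j=1}^n dz_j\wedge d\bar z_j>0$, so $\hat\omega$ is positive definite everywhere on $\Omega$.

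The heart of the argument is completeness, and it comes from the inequality $\hat\omega\ge 2\,i\partial p\wedge\dbar p$, obtained by discarding the two nonnegative terms $i\ddbar|z|^2$ and $2p\,i\ddbar p$. Applying the Cauchy-Schwarz inequality with respect to $\hat\omega$ to $i\partial p\wedge\dbar p\le\half\hat\omega$ gives the pointwise bound $|\partial p|^2_{\hat\omega}\le\half$, hence $|dp|_{\hat\omega}\le C$ for some constant $C$. Consequently, for any piecewise-$C^1$ curve $\gamma$ in $\Omega$,
$$|p(\gamma(1))-p(\gamma(0))|=\left|\int_0^1 dp\big(\gamma'(t)\big)\,dt\right|\le C\int_0^1|\gamma'(t)|_{\hat\omega}\,dt=C\cdot\mathrm{length}_{\hat\omega}(\gamma),$$
so $p$ is Lipschitz for the geodesic distance $d_{\hat\omega}$, i.e. $|p(x)-p(y)|\le C\,d_{\hat\omega}(x,y)$ for all $x,y\in\Omega$. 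Given a $d_{\hat\omega}$-Cauchy sequence $(x_k)$ in $\Omega$, it is $d_{\hat\omega}$-bounded, hence $\big(p(x_k)\big)$ is bounded, say $p(x_k)\le M$ for all $k$; thus $(x_k)$ stays in the relatively compact set $\{p\le M\}\subset\Omega$ and therefore has a subsequence converging in $\Omega$. A Cauchy sequence with a convergent subsequence converges, so $(\Omega,\hat\omega)$ is complete, proving the lemma.

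The only nontrivial point is this last step: the seemingly weak inequality $\hat\omega\ge 2\,i\partial p\wedge\dbar p$ forces the exhaustion function $p$ to grow at most Lipschitz-linearly in the $\hat\omega$-distance, so any path escaping to $\partial\Omega$ — along which $p\to+\infty$ — must have infinite $\hat\omega$-length. Everything else is automatic: the plurisubharmonic exhaustion exists by hypothesis, and the positivity of $i\ddbar F$ (where the term $i\ddbar|z|^2$ supplies definiteness precisely because $\Omega\subset\mc^n$) makes $\hat\omega$ Kähler for free. One could equally well take $F=|z|^2+e^{p}$ and avoid normalizing $p$, but the choice above is marginally cleaner because then $i\ddbar p^2$ contributes the constant-coefficient term $2\,i\partial p\wedge\dbar p$ used in the gradient bound.
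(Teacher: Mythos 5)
Your proof is correct, and it is essentially the argument of the cited reference: the paper itself offers no proof beyond the citation to Demailly, whose Theorem VIII.5.2 uses exactly the metric $i\ddbar(|z|^2+p^2)$ together with the bound $\hat\omega\ge 2\,i\partial p\wedge\dbar p$ and the resulting Lipschitz estimate on the exhaustion $p$ to conclude completeness.
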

		
		
		\subsection{Curvature positivity of real vector bundles}\
        \par In this subsection, we give the definition of curvature posivity of a real vector bundle, which is similar to Definition \ref{Curvature positivity of Berndtsson}. 
         \begin{defn}[{\cite[Definition 2]{Rau13}}]\label{defn:curvature operator}
            Let $g$ be a $C^2$ Riemannian metric on the trivial vector bundle $E:=D\times\mathbb{R}^r$ over an open subset $D\subset\mathbb{R}^n$, and let
            $$\Theta^{(E,g)}_{jk}:=-\frac{\partial}{\partial x^k}\left(g^{-1}\frac{\partial g}{\partial x^j}\right),\ 1\leq j,k\leq n,$$
            where differentiation should be interpreted elementwise. The curvature of $(E,g)$ is said to be positive (resp. strictly positive) in the sense of Nakano if for any nonzero $n$-tuple $(u_1, \cdots, u_n)\in C^\infty(U,\mr^n),$ we have
			$$\sum_{j,k=1}^n g(\Theta_{jk}^{(E,g)} u_j, u_k)\geq 0\ (\text{resp.} >0).$$
         \end{defn}
         \subsection{Some basic knowledge from measure theory}\ \\
          \indent In this subsection, we give some basic knowledge from measure theory that will be used several times
             when we consider applications to convex analysis.
            \begin{defn}
            Let $U\subset\mr^m,\ V\subset\mr^n$ be two measurable subsets, and let $\phi\colon U\rw V$ be a measurable map.
            For any $A\subset V$ which is  measurable, define $\mu(A):=|\phi^{-1}(A)|$, then $\mu$ is a measure on $V$,
            which is called the pushforward measure of the Lebesgue measure on $U$ via $\phi$.
            \end{defn}
            \begin{lem}\label{measure:1}
            Let $U\subset\mr^m,\ V\subset\mr^n$ be two  measurable subsets, let $\phi\colon U\rw V$ be a (Lebesgue) measurable map,
            and let $\mu$ be the pushforward measure of the Lebesgue measure on $U$ via $\phi$, then for any measurable function $f\colon V\rw [-\infty,\infty]$, we have
            $$\int_{U}fd\mu=\int_{\phi^{-1}(U)}f\circ\phi d\lambda$$
            provided one side of the above exists.
            \end{lem}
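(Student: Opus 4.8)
The plan is to run the standard measure-theoretic bootstrap: verify the identity first on indicator functions, extend it by linearity to nonnegative simple functions, then to arbitrary nonnegative measurable functions by the monotone convergence theorem, and finally to general $[-\infty,\infty]$-valued measurable functions by splitting into positive and negative parts. (I read the asserted equality as $\int_V f\,d\mu=\int_{\phi^{-1}(V)}f\circ\phi\,d\lambda$, noting that $\phi^{-1}(V)=U$ since $\phi$ maps $U$ into $V$.) The single elementary fact that drives everything is the pointwise identity $\mathbf{1}_A\circ\phi=\mathbf{1}_{\phi^{-1}(A)}$ on $U$, valid for every $A\subset V$, where $\mathbf{1}_S$ denotes the indicator function of a set $S$.

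First I would record the measurability bookkeeping: $\phi^{-1}(A)$ is Lebesgue measurable whenever $A\subset V$ is measurable --- this is exactly the hypothesis built into the definition of the pushforward measure $\mu$ --- and for $f$ measurable on $V$ the composition $f\circ\phi$ is measurable on $U$, since $(f\circ\phi)^{-1}(B)=\phi^{-1}(f^{-1}(B))$ for every Borel set $B$. Hence all integrals appearing below are well defined.

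Then the base case: for $f=\mathbf{1}_A$ with $A\subset V$ measurable,
$$\int_V\mathbf{1}_A\,d\mu=\mu(A)=|\phi^{-1}(A)|=\int_U\mathbf{1}_{\phi^{-1}(A)}\,d\lambda=\int_U\mathbf{1}_A\circ\phi\,d\lambda,$$
the last equality by the pointwise identity above. Linearity of both integrals upgrades this to all nonnegative simple $f=\sum_{i=1}^N c_i\mathbf{1}_{A_i}$, because $f\circ\phi=\sum_{i=1}^N c_i\mathbf{1}_{A_i}\circ\phi$. Next, for nonnegative measurable $f\colon V\rw[0,\infty]$, I would choose simple functions $s_k\uparrow f$ pointwise; then $s_k\circ\phi\uparrow f\circ\phi$ pointwise, and two applications of monotone convergence, on $(V,\mu)$ and on $(U,\lambda)$, combined with the simple-function case, give $\int_V f\,d\mu=\lim_k\int_V s_k\,d\mu=\lim_k\int_U s_k\circ\phi\,d\lambda=\int_U f\circ\phi\,d\lambda$. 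Finally, for general measurable $f$, apply this to $f^+:=\max\{f,0\}$ and $f^-:=\max\{-f,0\}$ separately, use $f^{\pm}\circ\phi=(f\circ\phi)^{\pm}$, and subtract; the hypothesis that one side of the desired equality exists guarantees that at least one of the (equal) quantities $\int_V f^+\,d\mu=\int_U f^+\circ\phi\,d\lambda$ and $\int_V f^-\,d\mu=\int_U f^-\circ\phi\,d\lambda$ is finite, so the subtraction is legitimate and both sides of the final identity are simultaneously defined.

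I do not expect a genuine obstacle here --- the statement is a classical ``transfer'' (change-of-variables) lemma for image measures, and the only points needing a word of care are the measurability of $\phi^{-1}(A)$, which the ambient setup already grants, and the routine $\pm\infty$ bookkeeping in the last step.
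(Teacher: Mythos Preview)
Your argument is correct and is the standard bootstrap proof of the change-of-variables identity for pushforward (image) measures. The paper itself does not supply a proof of this lemma; it is listed in the preliminaries as basic measure-theoretic background and stated without argument. You also correctly identified and handled the evident typographical slips in the statement (the integral on the left should be over $V$, and $\phi^{-1}(V)=U$).
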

            \begin{lem}\label{measure:2}
            Let $U\subset\mr^n$ be an open subset, $\phi\colon U\rw \mr^n$ be a $C^1$ map, then for any integrable function $f$ on $\phi(U)$, we know
            $$\int_{\phi(U)}f(y)d\lambda(y)=\int_{U}f(\phi(x))J_\phi(x)d\lambda(x),$$
            where $J_\phi:=|\det(D(\phi))|$ is the absolute value of the Jacobian of $\phi$.
            \end{lem}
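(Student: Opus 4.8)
The statement is essentially the classical change-of-variables formula, so the plan is to reduce it to the standard version for $C^1$-diffeomorphisms and then invoke that. One point worth flagging at the outset: as literally worded the identity can fail unless $\phi$ is injective (otherwise the right-hand side weights each point of $\phi(U)$ by the multiplicity $\#\phi^{-1}(y)$). In every place where this lemma is used in the paper the relevant map $\phi$ is injective — it is a coordinatewise exponential or logarithm — so I will carry out the proof under that (implicitly present) hypothesis; in the applications $\phi$ is in fact a $C^1$-diffeomorphism onto an open set, which makes the first step below vacuous.

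First I would dispose of the critical set. Put $Z:=\{x\in U : J_\phi(x)=0\}$. On $Z$ the right-hand integrand $(f\circ\phi)\,J_\phi$ vanishes identically, so $Z$ contributes nothing on the right. By the ``set of critical values is Lebesgue-null'' form of Sard's theorem, which is valid for $C^1$ maps between manifolds of the same dimension, the set $\phi(Z)$ is Lebesgue-null, so it contributes nothing on the left. Since $\phi$ is injective, $\phi(U)=\phi(Z)\sqcup\phi(U\setminus Z)$, where $\phi(U\setminus Z)$ is open (hence measurable) and $\phi(Z)$ is null; therefore it suffices to prove the identity with $U$ replaced by $U\setminus Z$ and $\phi(U)$ by $\phi(U\setminus Z)$, and then add back the two vanishing contributions.

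Next I would note that on $U\setminus Z$ the Jacobian determinant is nowhere zero, so by the inverse function theorem $\phi|_{U\setminus Z}$ is a local $C^1$-diffeomorphism; being moreover injective, it is a genuine $C^1$-diffeomorphism of $U\setminus Z$ onto the open set $\phi(U\setminus Z)$. At this stage the classical change-of-variables theorem for $C^1$-diffeomorphisms applies verbatim to $\phi|_{U\setminus Z}$ and to the integrable function $f$ on $\phi(U\setminus Z)$, giving $\int_{\phi(U\setminus Z)}f\,d\lambda=\int_{U\setminus Z}(f\circ\phi)\,J_\phi\,d\lambda$; combining this with the reduction of the previous paragraph yields the assertion, and Lemma~\ref{measure:1} then lets one pass freely between this integral and pushforward-measure integrals if desired. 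I do not expect any genuine obstacle here beyond this bookkeeping — the mathematical content is entirely the classical theorem, which we cite. The only two points needing a moment's care are the injectivity hypothesis discussed above and the null-image statement $|\phi(Z)|=0$, which is precisely the equidimensional $C^1$ case of Sard's theorem.
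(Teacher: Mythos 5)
Your proposal is correct, but there is nothing in the paper to compare it against: the paper states this lemma in its ``basic knowledge from measure theory'' subsection with no proof at all, treating it as the classical change-of-variables formula to be cited rather than established. Your reduction --- discard the critical set $Z=\{J_\phi=0\}$, on which the right-hand integrand vanishes and whose image is null by the equidimensional $C^1$ case of Sard's theorem, then apply the inverse function theorem and the classical theorem for $C^1$-diffeomorphisms on $U\setminus Z$ --- is a complete and correct way to fill that in. Your observation that the statement as literally worded requires injectivity of $\phi$ (otherwise the right-hand side counts points of $\phi(U)$ with multiplicity) is a genuine and worthwhile correction to the paper: the lemma is only ever invoked through Corollary \ref{measure:3}, which does impose injectivity and a non-vanishing Jacobian, and in the actual applications $\phi$ is a coordinatewise exponential restricted to a fundamental domain, hence a diffeomorphism onto its image, so nothing downstream is affected. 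The one point you pass over quickly --- that $f\circ\phi$ is measurable and that the disjointness $\phi(U)=\phi(Z)\sqcup\phi(U\setminus Z)$ uses injectivity --- is handled correctly by your framing, since measurability issues are absorbed into the classical diffeomorphism theorem you cite.
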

            Combining the above two lemmas, we have the following
            \begin{cor}\label{measure:3}
             Let $U\subset\mr^n$ be an open subset, $\phi\colon U\rw \mr^n$ be a $C^1$ map which is injective and has non-vanishing Jacobian
            on $U$, and let $\mu$ be the pushforward measure of the Lebesgue measure on $U$ via $\phi$, then the Radon-Nikodym derivative of $\mu$ with respect to
            the Lebesgue measure $\lambda$ on $V$ is
            $$\frac{d\mu}{d\lambda}=J_{\phi}^{-1}\circ\phi^{-1}.$$
            \end{cor}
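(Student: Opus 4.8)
The plan is to prove the stronger statement that $\mu=h\,\lambda$ as measures on $V:=\phi(U)$, where $h:=J_\phi^{-1}\circ\phi^{-1}$; this immediately gives both that $\mu$ is absolutely continuous with respect to $\lambda$ and that $h$ is (a version of) its Radon--Nikodym derivative, which is the assertion. First I would record a preliminary observation: since $\phi$ is $C^1$, injective and has non-vanishing Jacobian, the inverse function theorem shows that $V=\phi(U)$ is open in $\mr^n$ and that $\phi\colon U\rw V$ is a homeomorphism (in fact a $C^1$ diffeomorphism) with inverse $\phi^{-1}\colon V\rw U$. Hence $h=1/(J_\phi\circ\phi^{-1})$ is a well-defined, strictly positive, continuous --- in particular measurable --- function on $V$, so the measure $A\mapsto\int_A h\,d\lambda$ on $V$ makes sense. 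Note this is the only place the hypotheses ``injective'' and ``non-vanishing Jacobian'' enter.

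The core of the argument is to check that $\int_V g\,d\mu=\int_V g\,h\,d\lambda$ for every measurable $g\colon V\rw[0,\infty]$, which is equivalent to $\mu=h\,\lambda$. Starting from the left-hand side, Lemma \ref{measure:1} applied to the map $\phi\colon U\rw V$ and the function $g$ gives $\int_V g\,d\mu=\int_{U}(g\circ\phi)\,d\lambda$. I would then rewrite the integrand as $g\circ\phi=(f\circ\phi)\cdot J_\phi$, where $f:=g\cdot h$ on $V$; this is valid because $(h\circ\phi)(x)=(J_\phi^{-1}\circ\phi^{-1}\circ\phi)(x)=J_\phi(x)^{-1}$, so $(f\circ\phi)(x)\,J_\phi(x)=g(\phi(x))\,J_\phi(x)^{-1}\,J_\phi(x)=g(\phi(x))$. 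Applying Lemma \ref{measure:2} to $\phi$ and $f$ (noting $\phi(U)=V$) then yields
$$\int_{U}(g\circ\phi)\,d\lambda=\int_U (f\circ\phi)\,J_\phi\,d\lambda=\int_{V}f\,d\lambda=\int_V g\,h\,d\lambda.$$
Combining the two identities gives $\int_V g\,d\mu=\int_V g\,h\,d\lambda$; taking $g=\mathbf{1}_A$ for an arbitrary measurable $A\subset V$ gives $\mu(A)=\int_A h\,d\lambda$, i.e. $\frac{d\mu}{d\lambda}=h=J_\phi^{-1}\circ\phi^{-1}$, as desired.

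There is no genuine obstacle in this argument; the only points requiring a word of care are routine measure-theoretic bookkeeping. First, Lemma \ref{measure:2} is stated for integrable $f$, whereas I apply it to the possibly non-integrable non-negative function $f=g\,h$; the change-of-variables identity $\int_{\phi(U)}f\,d\lambda=\int_U(f\circ\phi)J_\phi\,d\lambda$ extends to all non-negative measurable $f$ by the standard truncation and monotone-convergence argument, or one may simply restrict to measurable $A\subset V$ with $\mu(A)<\infty$ and use that $\mu$ and $\lambda$ are $\sigma$-finite on $V$ (via an exhaustion of $U$ by sets of finite Lebesgue measure). Second, the well-definedness and measurability of $h$ is exactly the preliminary observation above. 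In particular, no regularity beyond $C^1$ is used, and the classical Jacobian identity $J_{\phi^{-1}}=J_\phi^{-1}\circ\phi^{-1}$ is not needed as a separate input --- it falls out of the computation.
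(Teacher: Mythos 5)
Your argument is correct and is exactly the route the paper intends: the corollary is stated as an immediate consequence of combining Lemma \ref{measure:1} (the pushforward identity) with Lemma \ref{measure:2} (change of variables), which is precisely what you carry out. The paper offers no further detail, so your explicit verification, including the remarks on openness of $\phi(U)$ and measurability of $J_\phi^{-1}\circ\phi^{-1}$, is a faithful (and slightly more careful) rendering of the same proof.
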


			\section{The proof of Theorem \ref{thm:Ber}}\
             
The proof of Theorem \ref{thm:Ber} is as follows.
             \begin{proof}
            Let $\Omega:=V\times W.$ By the usual approximation technique, we may assume $\psi$ is of class $C^2$ on $\Omega$. Consider the map
             $$\operatorname{id}\times\phi\colon \mathbb{C}_\tau^m\times \mathbb{C}_z^n\rightarrow\mathbb{C}_\tau^m\times(\mathbb{C}_w^*)^n,\
            (\tau,z_1,\cdots,z_n)\rightarrow(\tau,e^{z_1},\cdots,e^{z_n}),$$
            and let $\Omega':=(\operatorname{id}\times \phi)(\Omega),$ then $\Omega'$ is a pseudoconvex domain. It is clear that $\psi$ induces a $C^2$, plurisubharmonic function $\psi'$ on $\Omega'$ by
            $$\psi'(\tau,e^{z_1},\cdots,e^{z_n})=\psi(\tau,z_1,\cdots,z_n),$$
            and $\psi'$ is  strictly plurisubharmonic with respect to $\tau.$  Similarly, $\varphi$ also induces a plurisubharmonic function $\varphi'$ on $\Omega'$. We may regard $f$ as a $\bar{\partial}$-closed $(0,1)$-form on $\Omega'$. For any measurable subset $U\subset (\mc_w^*)^n,$ define
            $$
            \mu(U):=\left|\{z\in \mc^n|\ \phi(z)\in U,\ \im(z_1),\im(z_2),\cdots,\im(z_n)\in [0,2\pi)\}\right|,
            $$
            then $\mu$ is a measure on $(\mc_w^*)^n,$ and by Corollary \ref{measure:3}, we know
            $$
            d\mu(w)=\frac{1}{|w_1|^2\cdots|w_n|^2}d\lambda(w).
            $$
            \indent  By Lemma \ref{measure:1}, we have
            \begin{equation}\label{equ:1}
            \int_{\Omega'}\sum_{j,k=1}^{n}(\psi')^{j\bar k}f_j\bar{f}_ke^{-\varphi'-\psi'}d\mu
            =(2\pi)^n\int_{V\times D}\sum_{j,k=1}^{n}\psi^{j\bar{k}}f_j\bar{f}_ke^{-\varphi-\psi}d\lambda<\infty.
            \end{equation}
            Let $\varphi''(w):=\varphi'(w)+\sum_{j=1}^n\log|w_j|^2$ for any $w\in\Omega'$, then $\varphi''$ is a plurisubharmonic function on $\Omega'$,
            and
            $$
            e^{-\varphi'}d\mu(w)=e^{-\varphi''}d\lambda(w),\ \forall w\in (\mc^*)^n.
            $$
            By Lemma \ref{thm:Berndtsson L^2 estimate}, we can solve $\bar\partial_{(\tau,w)} u=f$ on $\Omega'$  with the estimate
            $$
            \int_{\Omega'}|u|^2 e^{-\varphi''-\psi'}d\lambda \leq\int_{\Omega'}\sum_{j,k=1}^{n}(\psi')^{j\bar k}f_j\bar{f}_ke^{-\varphi''-\psi''}d\lambda,
            $$
            i.e.
            \begin{equation}\label{equ:2}
            \int_{\Omega'}|u|^2 e^{-\varphi'-\psi'}d\mu\leq\int_{\Omega'}\sum_{j,k=1}^{n}(\psi')^{j\bar k}f_j\bar{f}_ke^{-\varphi'-\psi'}d\mu,
            \end{equation}
            Take $u$ to be minimal, then $u$ is independent of the argument of $w$ by the uniqueness of minimal solution. As $f$ is independent of $w$, we know $u$ is holomorphic in $w$, so $u$ is independent of $w$. Thus, we can regard $u$ as a function on $V$, then we have $\bar{\partial}u=f$. By Lemma \ref{measure:1} again, we have
            \begin{equation}\label{equ:3}
            \int_{\Omega'}|u|^2e^{-\varphi'-\psi'}d\mu=(2\pi)^n\int_{V\times D}|u|^2e^{-\varphi-\psi}d\lambda.
            \end{equation}
            Combining Formulas (\ref{equ:1}), (\ref{equ:2}), (\ref{equ:3}), we complete the proof.
            \end{proof}
            \section{The proof of Theorem \ref{thm:optimal}}
             In this section, we give and prove a more general version of Theorem \ref{thm:optimal}.
            \begin{thm}\label{thm:general optimal2}
             Let $D\subset \mr_x^n$ be a convex domain, $0\in \Delta\subset \mc_\tau$ be a bounded domain, and let $\varphi(\tau,z)$ be a plurisubharmonic function on $\Delta_\tau\times (D_x+i \mr^n_y),$ which is independent of $\im(z)$. Then there exists a holomorphic function $f$ on $\Delta$ satisfying $f(0)=1$ and
            $$
            \int_{\Delta\times D}|f(\tau)|^2e^{-\varphi(\tau,x)}d\lambda(\tau,x)\leq \frac{\pi}{(c_\Delta(0))^2}\int_D e^{-\varphi(0,x)}d\lambda(x),
            $$
            provided the right hand side is finite.
            \end{thm}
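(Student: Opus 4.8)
The plan is to mimic the proof of Theorem \ref{thm:Ber}, replacing H\"ormander's estimate (Lemma \ref{thm:Berndtsson L^2 estimate}) by the optimal $L^2$-extension theorem (Lemma \ref{thm:extension}): push the problem from the tube domain $\Delta\times(D+i\mr^n)$ onto a Reinhardt domain via the exponential map, extend the constant function $1$ there, and descend by torus invariance and uniqueness of the minimal extension.

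Concretely, I would first set $W:=D+i\mr^n_y$ and $\Omega:=\Delta_\tau\times W_z\subset\mc^{1+n}$. Since $D$ is convex, the tube $W$ is pseudoconvex, hence so is $\Omega$. Applying $\operatorname{id}\times\phi\colon(\tau,z)\mapsto(\tau,e^{z_1},\dots,e^{z_n})$ and setting $\Omega':=(\operatorname{id}\times\phi)(\Omega)=\Delta\times\phi(W)\subset\Delta\times(\mc^*_w)^n$, one checks $\Omega'$ is again pseudoconvex: in the $w$-variables $\phi(W)$ is a logarithmically convex Reinhardt domain, because $D$ is convex. As $\varphi$ is independent of $\im(z)$, it descends to a plurisubharmonic function $\varphi'$ on $\Omega'$ with $\varphi'(\tau,e^{z_1},\dots,e^{z_n})=\varphi(\tau,z)$; no regularization of $\varphi$ is needed, since Lemma \ref{thm:extension} only requires plurisubharmonicity. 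Exactly as in the proof of Theorem \ref{thm:Ber}, I would introduce the pushforward measure $\mu$ on $(\mc^*_w)^n$ with $d\mu(w)=\prod_j|w_j|^{-2}\,d\lambda(w)$ (Corollary \ref{measure:3}) and the modified weight $\varphi''(\tau,w):=\varphi'(\tau,w)+\sum_j\log|w_j|^2$, which is still plurisubharmonic and satisfies $e^{-\varphi'}\,d\mu(w)=e^{-\varphi''}\,d\lambda(w)$.

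Assuming without loss of generality that the right-hand side is finite, I would then apply Lemma \ref{thm:extension} with the bounded domain $\Delta$ as first factor, the pseudoconvex domain $\Omega'$, the weight $\varphi''$, and $f\equiv 1$ on the slice $H:=\{(\tau,w)\in\Omega'\mid\tau=0\}$; by Lemma \ref{measure:1} the hypothesis $\int_D e^{-\varphi(0,x)}\,d\lambda(x)<\infty$ is exactly the finiteness of $\int_H e^{-\varphi''(0,w)}\,d\lambda(w)=(2\pi)^n\int_D e^{-\varphi(0,x)}\,d\lambda(x)$. This produces a holomorphic extension of $1$ to $\Omega'$ satisfying B\l ocki's estimate; I would replace it by the minimal extension $F$, which a fortiori still satisfies the estimate. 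Since $\Omega'$, $\varphi''$, $H$ and the constant $1$ are all invariant under the $\mathbb{T}^n$-action $w\mapsto(e^{i\theta_1}w_1,\dots,e^{i\theta_n}w_n)$, uniqueness of the minimal extension forces $F$ to be $\mathbb{T}^n$-invariant; being holomorphic in $w$, its Laurent expansion in $w$ collapses to the constant term, so $F$ depends only on $\tau$, is holomorphic on $\Delta$, and $F(0)=1$.

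Finally, unwinding the measure identifications (Lemma \ref{measure:1}) gives $\int_{\Omega'}|F(\tau)|^2e^{-\varphi''}\,d\lambda=(2\pi)^n\int_{\Delta\times D}|F(\tau)|^2e^{-\varphi(\tau,x)}\,d\lambda$, so after cancelling the common factor $(2\pi)^n$ B\l ocki's inequality becomes precisely the asserted estimate, and we set $f:=F$. I expect no serious obstacle: pseudoconvexity of $\Omega'$, the measure and weight bookkeeping, and the descent via uniqueness of the minimal solution all run as in the proof of Theorem \ref{thm:Ber}. The only genuinely new point is the recognition that passing the optimal extension constant $\pi/(c_\Delta(0))^2$ through this machine yields the optimal extension statement on tube domains — specializing to $c_\Delta(0)=1/r$ when $\Delta$ is the disc of radius $r$ recovers Theorem \ref{thm:optimal}.
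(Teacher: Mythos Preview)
Your proposal is correct and essentially identical to the paper's own proof: the same exponential change of coordinates to a Reinhardt domain, the same modified weight $\varphi''=\varphi'+\sum_j\log|w_j|^2$ to convert $d\mu$ to Lebesgue measure, the same application of Lemma \ref{thm:extension} on $\Omega'$, and the same descent via uniqueness of the minimal extension. The only cosmetic difference is that you phrase the descent in terms of the Laurent expansion collapsing, whereas the paper simply notes that a holomorphic function independent of the arguments of the $w_j$ is constant in $w$.
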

            \begin{proof}
            Let $W_z:=D_x+i \mr^n_y\subset \mc_z^n,\ \Omega:=\Delta\times W.$  Consider the map
             $$\operatorname{id}\times\phi\colon \mathbb{C}_\tau^m\times \mathbb{C}_z^n\rightarrow\mathbb{C}_\tau^m\times(\mathbb{C}_w^*)^n,\
            (\tau,z_1,\cdots,z_n)\rightarrow(\tau,e^{z_1},\cdots,e^{z_n}),$$
            and let $\Omega':=(\operatorname{id}\times \phi)(\Omega),$ then $\Omega'$ is a pseudoconvex domain.  It is clear that $\varphi$ induces a $C^2$, plurisubharmonic function $\varphi'$ on $\Omega'$ by
            $$\varphi'(\tau,e^{z_1},\cdots,e^{z_n})=\varphi(\tau,z_1,\cdots,z_n).$$
            Similar as in the proof of Theorem \ref{thm:Ber}, we may get a measure on $(\mc_w^*)^n$ via $\phi$ which satisfies
            $$
            d\mu(w)=\frac{1}{|w_1|^2\cdots|w_n|^2}d\lambda(w),\ \forall w\in (\mc^*)^n.
            $$
            Let $\varphi''(w):=\varphi'(w)+\sum_{j=1}^n\log|w_j|^2$ for any $w\in\Omega'$, then $\varphi''$ is a plurisubharmonic function on $\Omega'$,
            and
            $$
            e^{-\varphi'}d\mu(w)=e^{-\varphi''}d\lambda(w).
            $$
            \indent Let $H:=\Omega'\cap \{\tau=0\}=\{0\}\times \pi(W),$  then by Lemma \ref{measure:1}, we have
            \begin{equation}\label{equ:4}
            \int_{H}e^{-\varphi''(0,w)}d\lambda(w)=\int_{H}e^{-\varphi'(0,w)}d\mu(w)=(2\pi)^n\int_D e^{-\varphi(0,x)}d\lambda(x)<\infty
            \end{equation}
            By Lemma \ref{thm:extension}, we know there is a holomorphic function $f$ on $\Omega'$ such that $f|_H=1$ and
            \begin{equation}\label{equ:5}
            \int_{\Omega'}|f(\tau,w)|^2e^{-\varphi''(\tau,w)}d\lambda(\tau)d\lambda(w)\leq \frac{\pi}{(c_\Delta(0))^2}\int_{H}e^{-\varphi''(0,w)}d\lambda(w),
            \end{equation}
            i.e.
            \begin{equation}\label{equ:5}
            \int_{\Omega'}|f(\tau,w)|^2e^{-\varphi'(\tau,w)}d\lambda(\tau)d\mu(w)\leq \frac{\pi}{(c_\Delta(0))^2}\int_{H}e^{-\varphi'(0,w)}d\mu(w).
            \end{equation}
            Take $f$ to be minimal, then we know $f$ is independent of the argument of $w$,
            and then $f$ is independent of $w$ as $f$ is holomorphic in $w.$ We may thus regard $f$ as a holomorphic function on $\Delta,$ then
            $f(0)=1.$ By Lemma \ref{measure:1} again, we have
            \begin{equation}\label{equ:6}
            \int_{\Omega'}|f(\tau,w)|^2e^{-\varphi'(\tau,w)}d\lambda(\tau)d\mu(w)=(2\pi)^n\int_{\Delta\times D}|f(\tau)|^2e^{-\varphi(\tau,x)}d\lambda(\tau,x).
            \end{equation}
            Therefore, the proof is complete by combining Formulas (\ref{equ:4}), (\ref{equ:5}), (\ref{equ:6}).
            \end{proof}
            \section{The proof of Theorem \ref{thm:Cor}}
            To prove Theorem \ref{thm:Cor}, let us firstly recall the complexification of a real inner product.
             Let $(V,g)$ be a finite dimensional real inner product space, and let $V_\mc:=V\otimes_\mr\mc$ be the complexification of
             $V$, then we can extend $g$ to a Hermitian inner product $h$ on $V_\mc$ by setting
             $$h(x\otimes z, y\otimes w)=z\bar{w}g(x,y),\ \forall x,y\in V,\ \forall z,w\in\mc.$$
             When we define a Hermitian inner product $h$ as above, we always say $g$ extends to a Hermitian inner product $h$ for brevity.\\
             \indent Now we give the proof of Theorem \ref{thm:Cor}.
            \begin{proof}
            Let $\Omega_z:=D_x+i \mr_y^n\subset \mc_z^n.$
            Consider the map
            $$\phi\colon \mc_z^n\rw (\mc_w^*)^n,\ (z_1,\cdots,z_n)\mapsto (e^{z_1},\cdots,e^{z_n}),$$
            and let $\Omega':=\phi(\Omega),$ then $\Omega'$ is a complete K\"ahler manifold with the canonical K\"ahler metric $\omega$ by Lemma \ref{thm:complete}.
            Similar as in the proof of Theorem \ref{thm:Ber}, we may get a measure on $(\mc_w^*)^n$ via $\phi$ which satisfies
            $$
            d\mu(w)=\frac{1}{|w_1|^2\cdots|w_n|^2}d\lambda(w),\ \forall w\in (\mc^*)^n.
            $$
            Let $E':=\Omega'\times \mc^r$ be the trivial vector bundle over $\Omega'.$
            We extend $g$ to a Hermitian metric $h$ on the trivial vector bundle $\Omega\times\mc^r$ over $\Omega$ such that $h$ is independent of $\im (z)$ for any $z\in\Omega$, then $h$ induces a Hermitian metric $h'$ on $E'$ via $\phi.$ Similar to the proof of Theorem \ref{thm:Ber}, we know $f$ induces a $\bar{\partial}$-closed $(0,1)$-form
            $f'\in L^2(\Omega',\Lambda^{0,1}T^*\Omega'\otimes E').$  For any $w\in\Omega'$, we set
            $$
            (h'')_w:=\frac{1}{|w_1|^2\cdots|w_n|^2}(h')_w.
            $$
            \indent  Since $(E,g)$ is Nakano positive, then
            $$
            A_{E'}:=[i\Theta^{(E',h'')},\Lambda_\omega]\geq 0\text{ on }\Lambda^{0,1}T^*\Omega'\otimes E'
            $$
            by a simple computation. By Lemma \ref{measure:1}, we have
            \begin{align}\label{equ:7}
            \int_{\Omega'}\langle A^{-1}_{E'}f',f'\rangle_{h''}d\lambda&=\int_{\Omega'}\langle A^{-1}_{E'}f',f'\rangle_{h'}d\mu=(2\pi)^n\int_{D}\langle(\Theta^{(E,g)})^{-1}f,f\rangle_{g}d\lambda\\
            &<\infty.\nonumber
            \end{align}
            By Lemma \ref{lem: L2 estimate Nakano}, we may solve $\bar{\partial}u'=f'$ with the estimate
            \begin{equation}\label{equ:8}
            \int_{\Omega'}|u'|^2_{h''}d\lambda\leq \int_{\Omega'}\langle A_{E'}^{-1}f',f'\rangle_{h''}d\lambda.
            \end{equation}
            Take $u'$ to be minimal, then $u'$ is independent of $\im(w)$. For any $x\in D$, define $u(x):=u'(\pi(x)),$
            then $du=f$. By Lemma \ref{measure:1} again, we know
            \begin{equation}\label{equ:9}
            \int_{\Omega'}|u'|^2_{h'}d\mu=(2\pi)^n\int_D|u|_g^2d\lambda.
            \end{equation}
            Thus, the desired  result follows from Formulas (\ref{equ:7}), (\ref{equ:8}), (\ref{equ:9}).
            \end{proof}
            \section{The proof of Theorem \ref{thm(extend):curvartue positive circular domains}}
             In this section, we will give the proof of Theorem \ref{thm(extend):curvartue positive circular domains}. 
              We first recall the following involved notion.
              
              \begin{defn}\label{def:strict p.s.c family}
			Let $U\subset\mc^n$ be a domain, and let $p\colon\mc^n\times\mc^m\ra\mc^n$ be the natural projection.\bi
			\item[(1)] A \emph{family of bounded domains} (of dimension $m$ over $U$) is a domain $\Omega\subset U\times\mc^m$ such that $p(\Omega)=U$
             and all fibers $\Omega_t:=p^{-1}(t)\subset\mc^m\ (t\in U)$ are bounded.  
			\item[(2)] A family of bounded domains $\Omega$ over $U$ has\emph{ $C^2$  boundary }if  there exists a $C^2$-function $\rho$
			defined on $U\times\mc^m$ such that $\Omega=\{(t,z)\in U\times\mc^m|\ \rho(t,z)<0\}$ and $d(\rho|_{\Omega_t})\neq 0$
            on $\partial\Omega_t$ for all $t\in U$. Such a function $\rho$ is  called a \emph{defining function} of $\Omega$.
			\item[(3)] A family of bounded domains $\Omega\subset U\times \mc^m$ is called \emph{strictly pseudoconvex} if it admits a $C^2$-boundary defining function that is
            strictly plurisubharmonic on some neighborhood of $\overline{\Omega}$ in $U\times\mc^m$.
			\ei
		\end{defn}

            \indent In this section, we always let $j,k=1,\cdots, n$ represent the indices of the components of $t=(t_1,\cdots, t_n)\in U$,
            $p,q,s,a=1, \cdots, m$ represent the indices of the components of  $z=(z_1,\cdots, z_m)\in \mc^m$, $\lambda,\mu,\alpha,\beta,\gamma=1,\cdots, r$ represent the indices of the components of $F$. To simplify the notation, we always write $dz$ for $dz_1\wedge\cdots\wedge dz_m$.\\
            \indent Write
            $$D_{t_j}^Fe_\lambda=\sum_{\mu}\Gamma_{j\lambda}^\mu dt_j\otimes e_\mu,\ D_{z_p}^Fe_\lambda=\sum_{\mu}\Gamma_{p\lambda}^\mu dz_p\otimes e_\mu,$$
            and we also adopt the following notations
            $$
            \left(\begin{array}{cc}
            	(A_{pq\lambda}^\alpha)& (A_{pj\lambda}^\alpha)\\
            	(A_{jp\lambda}^\alpha) & (A_{jk\lambda}^\alpha)
            \end{array}\right):=\left(\begin{array}{cc}
            	\left(-\frac{\partial\Gamma_{p\lambda}^\alpha}{\partial\bar{z}_q}\right)& \left(-\frac{\partial\Gamma_{p\lambda}^\alpha}{\partial\bar{t}_j}\right)\\
            	\left(-\frac{\partial\Gamma_{j\lambda}}{\partial\bar{z}_p}\right)& \left(-\frac{\partial\Gamma_{j\lambda}^\alpha}{\partial\bar{t}_k}\right)
            \end{array}\right),\ \ h^F_{\lambda\mu}:=h_{(t,z)}^F(e_\lambda,e_\mu),$$
            $$
            \left(\begin{array}{cc}
            	(A_{pq\lambda\mu})& (A_{pj\lambda\mu})\\
            	(A_{jp\mu\lambda}) & (A_{jk\lambda\mu})
            \end{array}\right):=\left(\begin{array}{cc}
            	\left(-\frac{\partial\Gamma_{p\lambda}^\alpha}{\partial\bar{z}_q}h^F_{\alpha\mu}\right)& \left(-\frac{\partial\Gamma_{p\lambda}^\alpha}{\partial\bar{t}_j}h^F_{\alpha\mu}\right)\\
            	\left(-\frac{\partial\Gamma_{p\lambda}^\alpha}{\partial\bar{t}_j}h^F_{\alpha\mu}\right)^*& \left(-\frac{\partial\Gamma_{j\lambda}^\alpha}{\partial\bar{t}_k}h^F_{\alpha\mu}\right)
            \end{array}\right),$$
            where $^{*}$ represents the conjugate of the transpose of a matrix. It is clear that
            $$\Theta_{jk}^F=[D_{t_j}^F,\bar{\partial}_{t_k}]=\sum_{\lambda,\alpha}A_{jk\lambda}^\alpha e_\lambda^*\otimes e_\alpha,$$
            where $e_1^*,\cdots,e_r^*$ is the dual basis of $e_1,\cdots,e_r$. \\
            \indent To prove Theorem \ref{thm(extend):curvartue positive circular domains}, we need the following key lemma,
            which is a generalization of \cite[Formula (3.1)]{Ber09}.
             \begin{lem}\label{lem:key lemma}
            	Let $U\subset\mc^n$ be a pseudoconvex domain, and let $D\subset\mc^m$ be a pseudoconvex bounded domain.
            	let $(F,h^F)$ be a Nakano positive  trivial vector bundle of finite rank $r$ defined on some neighborhood of the closure of $\Omega:=U\times D$. Let $\{e_1, \cdots, e_r\}$ be the canonical holomorphic frame of $(F,h^F)$. For any $t\in U$, set
            	$$E_t:=\{f=\sum_{\lambda=1}^r f_\lambda \otimes e_\lambda |\ f_\lambda \in \mathcal O(D)\text{ for all }\lambda,\ \|f\|^2_t<\infty\},$$
            	where
            	$$\|f\|^2_t:=\int_{D}\sum_{\lambda,\mu=1}^rf_\lambda(z)\overline{f_\mu(z)}h^F_{(t,z)}(e_\lambda,e_\mu)d\lambda(z),$$
            	and set $E:=\cup_{t\in U}E_t$, then $E$ is a holomorphic vector bundle (of infinite rank) over $U$ with a Hermitian metric $h$ given by
            	$$h_t^E(f,g):=\int_{D}\sum_{\lambda,\mu=1}^rf_\lambda(z)\overline{g_\mu(z)}h^F_{(t,z)}(e_\lambda,e_\mu)d\lambda(z),\ \forall f,g\in E_t.$$
            	Then for any smooth sections $u_j:=\sum_{\lambda}u_{j\lambda}dz\otimes e_\lambda(1\leq j\leq n)$ of $E$, we have
            	\begin{align*}
            		\sum_{j,k}h^E\left(\Theta^{E}_{jk}u_j,u_k\right)
            		&\geq \int_D \sum_{j,k,\beta,\gamma}\left(A_{jk\beta\gamma}-\sum_{p,q,\lambda,\mu}B_{pq\lambda\mu}A_{jq\beta\mu}A_{pk\lambda\gamma}\right)u_{j\beta}\overline{u_{k\gamma}} d\lambda\\
            		&=:\int_D \sum_{j,k,\beta,\gamma} H(h^F)_{jk\beta \gamma} u_{j\beta}\overline{u_{k\gamma}} d\lambda,
            	\end{align*}
            	where $B$ is uniquely determined by $A$ and satisfies
            	$$\sum_{q,\beta}A_{pq\lambda\beta}B_{sq\alpha\beta}=\delta_{ps}\delta_{\lambda\alpha}.$$
            \end{lem}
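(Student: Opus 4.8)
The plan is to compute the Chern curvature of $(E,h^E)$ by a direct Bochner-type argument in the spirit of \cite{Ber09}, the key simplification being that the family $\Omega=U\times D$ is a \emph{product}, so there are no Kodaira--Spencer contributions, and then to control the unavoidable ``fiberwise error'' term by the H\"ormander--Demailly $L^2$-estimate on $D$ (Lemma \ref{lem: L2 estimate Nakano}). First I would identify the Chern connection of $(E,h^E)$: a holomorphic section of $E$ over $U$ is the same thing as an $F$-valued holomorphic $(m,0)$-form $\sum_\lambda u_\lambda(t,z)\,dz\otimes e_\lambda$ on $D$ depending holomorphically on $t$, and I claim that for such a section $u$ one has $D^E_{t_j}u=P_t\big(D^F_{t_j}u\big)$, where $P_t\colon L^2\big(D,\Lambda^{m,0}\otimes F|_{D}\big)\to E_t$ is the orthogonal (Bergman-type) projection onto holomorphic sections (with respect to $h^F_{(t,\cdot)}$). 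This is checked against an arbitrary holomorphic $v\in E_t$: since $v$ is transparent to $P_t$ and $\bar\partial_{t_j}v=0$, the Chern compatibility of $h^F$ gives $h^E\big(P_t(D^F_{t_j}u),v\big)=\int_D\langle D^F_{t_j}u,v\rangle_{h^F}\,d\lambda=\partial_{t_j}\int_D\langle u,v\rangle_{h^F}\,d\lambda=\partial_{t_j}h^E(u,v)$, which is exactly the characterizing property of $D^E_{t_j}u$.

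Next I would reduce the inequality at a fixed $t_0\in U$ to an estimate for a fiberwise error term. Both sides of the assertion at $t_0$ depend only on $u_j(t_0)\in E_{t_0}$, so I may replace each $u_j$ by the holomorphic section of $E$ equal, for every $t$, to $u_j(t_0)$ (legitimate, since $E_t$ is the same space $\{\sum u_\lambda dz\otimes e_\lambda:u_\lambda\in\mo(D)\}$ for all $t$); in particular $\bar\partial_{t_\bullet}u_j\equiv 0$ and $D^F_{t_j}u_j=\sum_{\beta,\mu}u_{j\beta}\Gamma^\mu_{j\beta}\,dz\otimes e_\mu$. For holomorphic sections one has the standard identity $h^E\big(\Theta^E_{jk}u,v\big)=h^E\big(D^E_{t_j}u,D^E_{t_k}v\big)-\partial_{\bar t_k}\partial_{t_j}h^E(u,v)$, coming from $\bar\partial_{t_k}D^E_{t_j}u=-\Theta^E_{jk}u$. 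Writing $V:=\sum_j D^F_{t_j}u_j\in L^2(D,\Lambda^{m,0}\otimes F)$, using $\sum_j D^E_{t_j}u_j=P_{t_0}V$ (linearity of $P_{t_0}$) and the orthogonal decomposition $\|V\|^2=\|P_{t_0}V\|^2+\|V-P_{t_0}V\|^2$, and evaluating $\sum_{j,k}\partial_{\bar t_k}\partial_{t_j}h^E(u_j,u_k)$ by differentiating $\int_D\langle u_j,u_k\rangle_{h^F}\,d\lambda$ under the integral sign with $\bar\partial_{t_k}D^F_{t_j}u_j=-\Theta^F_{jk}u_j$ and $\langle\Theta^F_{jk}u_j,u_k\rangle_{h^F}=\sum_{\beta,\gamma}A_{jk\beta\gamma}u_{j\beta}\overline{u_{k\gamma}}$, the positive ``diagonal'' term $\|\sum_j D^E_{t_j}u_j\|^2$ and $\|P_{t_0}V\|^2$ cancel and I arrive at the exact identity
\[
\sum_{j,k}h^E\big(\Theta^E_{jk}u_j,u_k\big)=\int_D\sum_{j,k,\beta,\gamma}A_{jk\beta\gamma}u_{j\beta}\overline{u_{k\gamma}}\,d\lambda-\|V-P_{t_0}V\|^2.
\]

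To finish, I would bound $\|V-P_{t_0}V\|^2$ from above. This form is orthogonal to all holomorphic $(m,0)$-forms, hence is the minimal $L^2$-solution on $D$ of $\bar\partial_z g=\bar\partial_z V$, and a short computation gives $\bar\partial_z V=-\sum_{j,q,\beta,\mu}u_{j\beta}A^\mu_{jq\beta}\,d\bar z_q\wedge dz\otimes e_\mu$. Since $D$ is a bounded pseudoconvex domain it is complete K\"ahler (Lemma \ref{thm:complete}), and the $z$-$z$ block $(A_{pq\lambda\mu})$ of the curvature of $(F,h^F)$ is positive definite (it is the restriction to the $z$-directions of a Nakano positive curvature, which also guarantees existence of its inverse $B$, normalized by $\sum_{q,\beta}A_{pq\lambda\beta}B_{sq\alpha\beta}=\delta_{ps}\delta_{\lambda\alpha}$), so Lemma \ref{lem: L2 estimate Nakano} applies with $q=1$ on the fiber $\{t_0\}\times D$ and yields, for the minimal solution, $\|V-P_{t_0}V\|^2\le\int_D\big\langle A_F^{-1}(\bar\partial_z V),\bar\partial_z V\big\rangle_{h^F}\,d\lambda$, where $A_F$ is the curvature operator $i\Theta\wedge\Lambda_\omega$ of $(F,h^F)|_{\{t_0\}\times D}$ acting on $(m,1)$-forms, which on the $d\bar z$-coefficients is the operator attached to the matrix $(A_{pq\lambda\mu})$. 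Expanding this integrand, substituting the components of $\bar\partial_z V$, and trading bare $F$-indices for $h^F$-lowered ones via $A_{jq\beta\mu}=\sum_\alpha A^\alpha_{jq\beta}h^F_{\alpha\mu}$, the contraction collapses to $\int_D\sum B_{pq\lambda\mu}A_{jq\beta\mu}A_{pk\lambda\gamma}u_{j\beta}\overline{u_{k\gamma}}\,d\lambda$; inserting this into the displayed identity gives the asserted inequality with $H(h^F)$ as defined.

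The main obstacle is the index bookkeeping in the last step: matching $\big\langle A_F^{-1}(\bar\partial_z V),\bar\partial_z V\big\rangle_{h^F}$ with the claimed $B,A,A$ contraction requires keeping careful track of all the $h^F_{\alpha\beta}$ factors, of the conjugations, and of the raising/lowering of $F$-indices implicit in passing between $A^\alpha_{\bullet\bullet\bullet}$ and $A_{\bullet\bullet\bullet\bullet}$ — this is precisely where the particular normalization of $B$ in the statement is forced, and where one must check that $A_F$ on $(m,1)$-forms is genuinely the curvature quadratic form of the $z$-directions. A secondary, purely technical point is to verify that $V$ and $\bar\partial_z V$ lie in $L^2(D)$ (they do, since $D$ is bounded and $h^F$ is smooth up to $\overline D$, so the connection and curvature coefficients are bounded there, while each $u_{j\beta}$ is square-integrable by hypothesis) and that the abstract minimal solution supplied by Lemma \ref{lem: L2 estimate Nakano} coincides with $V-P_{t_0}V$.
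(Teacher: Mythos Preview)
Your proposal is correct and follows essentially the same route as the paper's proof. The paper packages the curvature identity via an ambient $L^2$ bundle $G\supset E$ and the second fundamental form formula $\sum_{j,k}h^E(\Theta^E_{jk}u_j,u_k)=\sum_{j,k}\int_D h^F(\Theta^F_{jk}u_j,u_k)\,d\lambda-\|\pi_\perp V\|^2$, whereas you derive the identical identity directly from $D^E_{t_j}=P_t\circ D^F_{t_j}$ and the orthogonal decomposition $\|V\|^2=\|P_{t_0}V\|^2+\|V-P_{t_0}V\|^2$; the two computations are the same, and the subsequent application of Lemma~\ref{lem: L2 estimate Nakano} to bound $\|V-P_{t_0}V\|^2=\|\pi_\perp V\|^2$ and the index unwinding of $\langle A_F^{-1}\bar\partial_z V,\bar\partial_z V\rangle_{h^F}$ are carried out exactly as in the paper.
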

            \begin{proof}
            For any $t\in U$, set
            $$G_t:=\{u:=\sum_{\lambda} u_\lambda dz\otimes e_\lambda|\ u_\lambda \in L^2(D)\text{ for all }\lambda\}$$
    		with an inner product $h^G_t$ given by
    		$$h_t^G(u,v):=\int_{\Omega_t} \sum_{\lambda,\mu}u_\lambda(z)\overline{v_\mu(z)}h^F_{(t,z)}(e_\lambda,e_\mu)d\lambda(z),\ \forall u,v \in G_t,$$
    		then $G:=\cup_{t\in U}G_t$  can be viewed as a Hermitian holomorphic (trivial) vector bundle over $U.$ Let $\pi\colon G\rw E$ be the fiberwise orthogonal projection, and
            let $\pi_\perp$ be the orthogonal projection on the orthogonal complement of $E$ in $G.$ Put
            $$w:=\pi_\perp(\sum_j D^F_{t_j}u_j),\ \|w\|_{h^F}^2:=\int_{D}h^F(w,w)d\lambda,$$
            then we have
                 $$D^{E}=\pi \circ D^G$$
		    	and
		    	\begin{align*}
		    		&\quad \sum_{j,k}h^E\left(\Theta^{E}_{jk}u_j,u_k\right)\\
                   &=\sum_{j,k} h^G\left(\Theta^G_{jk}u_j,u_k\right)-h^G\left(\pi_\perp\sum_j D^G_{t_j}u_j,\pi_\perp\sum_j D^G_{t_j}u_j\right)\\
		    		&=\sum_{j,k}\int_{D}h^F\left(\Theta^F_{jk}u_j,u_k\right)d\lambda-\left\|w\right\|_{h^F}^2
		    	\end{align*}
		    	for any smooth sections $u_1, \cdots, u_n$ of $E$. For fixed $t\in U$, $w$ solves the $\dbar_z$-equation
		    	$$\dbar_z w =\dbar_z \left(\pi_\perp\sum_j D^F_{t_j}u_j \right)=\dbar_z \left(\sum_j D^F_{t_j}u_j \right).$$
		    	\indent We will use Lemma \ref{lem: L2 estimate Nakano} to give an estimate of $\|w\|_{h^F}^2$.
                By Lemma \ref{thm:complete}, we know $D$ is a complete K\"ahler manifold. Let $\omega$ be the standard K\"ahler metric on $D$,
                and set $A:=[i\Theta^{(F_t,h^F)},\Lambda_\omega]$, then we know $A\geq 0$ as $(F,h^F)$ is Nakano strictly positive.
                 Set $v:=(-1)^n\dbar_z w \in L^2(D,\wedge^{n,1}T^*D\otimes F_t),$ then $v$ satisfies $\dbar_z v=0.$ By Lemma \ref{lem: L2 estimate Nakano}, we have
		    	$$\|w\|_{h^F}^2 \leq \int_{D} \langle A^{-1}v,v\rangle_{h^F} d\lambda(\omega),$$
               then
		    	$$\sum_{j,k}h^E\left(\Theta^{E}_{jk}u_j,u_k\right)\geq \sum_{j,k}\int_Dh^F\left(\Theta^F_{jk}u_j,u_k\right)d\lambda-\int_{D} \langle A^{-1}v,v \rangle_{h^F} d\lambda.$$
                Now we need to compute the right hand side the above inequality. To do so, we need some simple but long computation.\\
		    	\indent It is clear that
		    	$$\sum_{j,k}h^F\left(\Theta_{jk}^Fu_j,u_k\right)=\sum_{j,k,\lambda,\mu}A_{jk\lambda}^\alpha u_{j\lambda}\overline{u_{k\mu}}h^F_{\alpha\mu}
		    	=\sum_{j,k,\lambda,\mu}A_{jk\lambda\mu}u_{j\lambda}\overline{u_{k\mu}},$$
		    	and
		    	\begin{align*}
		    		v&=\bar{\partial}_z((-1)^n\sum_jD_{t_j}^Fu_j)=\sum_{j,\lambda,\mu}u_{j\lambda}\frac{\partial\Gamma_{j\lambda}^\mu}{\partial\bar{z}_p}dz\wedge d\bar{z}_p\otimes e_\mu\\
		    		&=\sum_{j,\lambda,\mu}u_{j\mu}A_{jp\mu}^\lambda dz\wedge d\bar{z}_p\otimes e_\lambda.
		    	\end{align*}
                 Hence it suffices to prove that
                $$\langle A^{-1}v,v\rangle_{h^F}=\sum_{j,k,p,q,\lambda,\mu,\beta,\gamma}B_{pq\lambda\mu}A_{jq\beta\mu}A_{pk\lambda\gamma}u_{j\beta}\overline{u_{k\gamma}}$$
                for some $B$ to be determined later.\\
		    	\indent For any
		    	$$g:=\sum_{p,\lambda}g_{p\lambda}dz\wedge d\bar{z}_p\otimes e_\lambda \in L^2(D,\wedge^{n,1}T^*D\otimes F_t),$$
		    	we have
		    	\begin{align*}
		    		Ag&=i\Theta^{(F_t,h^F)}(\Lambda_\omega g)=\left(i\sum_{p,q,\lambda,\mu}A_{pq\lambda}^\mu dz_p\wedge d\bar{z}_q\otimes e_\lambda^*\otimes e_\mu\right)(\Lambda_\omega g)\\
		    		&=\sum_{p,q,j,\lambda,\mu}A_{pq\lambda}^\mu g_{p\lambda}dz\wedge d\bar{z}_q\otimes e_\mu,
		    	\end{align*}
                 where $e_1^*,\cdots,e_r^*$ is the dual frame of $e_1,\cdots,e_r.$ Note that we may view $A$ as a linear map $T$ which satisfies
		    	$$T(d\bar{z}_p\otimes e_\lambda)= \sum_{q,\mu}A_{pq\lambda}^\mu d\bar{z}_q\otimes e_\mu.$$
		    	Since $(F,h^F)$ is Nakano strictly positive, then  for any nonzero matrices $u_{p\lambda}$, we have
		    	$$\sum_{p,q,\lambda,\mu}A_{pq\lambda\mu}u_{p\lambda}\overline{u_{q\mu}}>0,$$
		    	and then $T$ is invertible. Let $T^{-1}$ be the inverse map of $T$, and write
		    	$$T^{-1}(d\bar{z}_q\otimes e_\mu)=\sum_{p,\lambda}B_{pq\lambda}^\mu d\bar{z}_p\otimes e_\lambda,$$
		    	then we have
		    	$$d\bar{z}_p\otimes e_\lambda=T^{-1}(\sum_{q,\mu}A_{pq\lambda}^\mu d\bar{z}_q\otimes e_\mu)=\sum_{q,s,\alpha,\mu}A_{pq\lambda}^\mu B_{sq\alpha}^\mu d\bar{z}_s\otimes e_\alpha,$$
		    	i.e.,
		    	$$\sum_{q,\mu}A_{pq\lambda}^\mu B_{sq\alpha}^\mu=\delta_{ps}\delta_{\lambda\alpha}.$$
		    	Let
		    	$$B_{sq\alpha\beta}:=\sum_{\mu} B_{sq\alpha}^\mu (h^F)^{\beta\mu},$$
                where $\left((h^F)^{\beta\mu}\right)_{1\leq \beta,\mu\leq r}$ is the inverse matrix of $\left(h_{\beta\mu}^F\right)_{1\leq \beta,\mu\leq r}$, then we have
		    	$$\sum_{q,\beta}A_{pq\lambda\beta}B_{sq\alpha\beta}=\sum_{q,\mu}A_{pq\lambda}^\mu B_{sq\alpha}^\mu=\delta_{ps}\delta_{\lambda\alpha}.$$
		    	Define linear maps
		    	$$S_1(d\bar{z}_p\otimes e_\lambda)=\sum_{q,\mu}A_{pq\lambda\mu}d\bar{z}_q\otimes e_\mu,\ S_2(d\bar{t}_j\otimes e_\lambda)=\sum_{q,\mu}A_{jq\lambda\mu}d\bar{z}_q\otimes e_\mu,$$
		    	$$S_3(d\bar{z}_p\otimes e_\lambda)=\sum_{k,\mu}A_{pk\lambda\mu}d\bar{t}_k\otimes e_\mu,\ S_4(d\bar{t}_j\otimes e_\lambda)=\sum_{k,\mu}A_{jk\lambda\mu}d\bar{t}_k\otimes e_\mu,$$
		    	$$B(d\bar{z}_q\otimes e_\mu)=\sum_{p,\lambda}B_{pq\lambda\mu}d\bar{z}_p\otimes e_\lambda,$$
		    	then we have
		    	$$B S_1(d\bar{z}_p\otimes e_\lambda)=B\left(\sum_{q,\mu}A_{pq\lambda\mu}d\bar{z}_q\otimes e_\mu\right)=
		    	\sum_{q,s,\mu,\alpha}A_{pq\lambda\mu}B_{sq\alpha\mu}d\bar{z}_s\otimes e_\alpha=d\bar{z}_p\otimes e_\lambda,$$
		    	so $B=S_1^{-1}$. \\
		    	\indent Let
                 $$v_{q\lambda}:=\sum_{j,\mu}u_{j\mu}A_{jp\mu}^\lambda,$$
                 then we know
		    	$$A^{-1}v=\sum_{p,q,\lambda,\mu} B_{pq\lambda}^\mu v_{q\mu}dz_1\wedge\cdots\wedge dz_n\wedge d\bar{z}_p\otimes e_\lambda,$$
		    	and
		    	\begin{align*}
                \langle A^{-1}v,v\rangle_{h^F}&=\sum_{p,q,\lambda,\mu,\alpha}B_{pq\lambda}^\mu v_{q\mu}\overline{v_{p\alpha}}h^F_{\lambda\alpha}=\sum_{p,q,j,k,\lambda,\mu,\beta,\gamma}B_{pq\lambda}^\mu A_{jq\beta}^\mu \overline{A_{kp\gamma\lambda}}u_{j\beta}\overline{u_{k\gamma}}\\
                &=\sum_{j,k,p,q,\lambda,\mu,\beta,\gamma}B_{pq\lambda\mu}A_{jq\beta\mu}A_{pk\lambda\gamma}u_{j\beta}\overline{u_{k\gamma}}.
                \end{align*}
                Then the proof is complete.
            \end{proof}

 Recall that a domain $D\subset \mc^m$ is called a circular domain if it is invariant under the action of $\mathbb{S}^1$
 on $\mc^m$ given by
        $$e^{i\theta}\cdot(z_1,\cdots, z_m):=(e^{i\theta}z_1,\cdots, e^{i\theta}z_m),\ \theta\in\mr,$$
        and is called a Reinhardt domain if it is invariant under the action of the torus group $\mathbb{T}^m$
        on $\mc^m$ given by
        $$(e^{i\theta_1},\cdots, e^{i\theta_m})\cdot(z_1,\cdots, z_m):=(e^{i\theta_1}z_1,\cdots, e^{i\theta_m}z_m),\ \theta_i\in\mr.$$
        
            We now give the proof of  Theorem \ref{thm(extend):curvartue positive circular domains}.
            \begin{proof}
            	Since $\Omega$ is strictly pseudoconvex with $C^2$-boundary, there is a defining function $\rho$
            	which is $C^2$-smooth and strictly plurisubharmonic on some neighborhood of $\overline{\Omega}$ and $\mathbb{S}^1$-invariant with respect to $z$.
            	For any fixed $t_0\in U$ and any $q>0$, let $D:=\{z\in \mc^n|\ \rho(t_0,z)<q\}$, then there exists a  neighborhood $U'\subset\subset U$ of $t_0$
            	and $0<q<1$ such that
            	\begin{itemize}
            		\item[(1)] $\rho$ is defined, $C^2$-smooth and strictly plurisubharmonic on some neighborhood of the closure of $U'\times D$;
            		\item[(2)] $(F,h^F)$ is defined and Nakano positive on some neighborhood of the closure of $U'\times D$;
            		\item[(3)] let $p\colon \mc^n\times\mc^m\ra\mc^n$ be the natural projection, then $p^{-1}(U')\cap\Omega\subset U'\times D$;
            		\item[(4)] $D$ is a pseudoconvex circular domain;
            		\item[(5)] $\Omega_{t_0}\subset\subset D.$
            	\end{itemize}
            	Since the result to be proved is local, we may assume
            	$$U=U',\ \Omega\subset U\times D=:\tilde\Omega,\ \Omega_{t_0}\subset\subset D.$$
            \indent For any $N \in \mathbb{N},\ \epsilon>0$, we set
			$$h^F_N:= h^Fe^{-N\max\nolimits_{\left(\frac{1}{N^3}, \frac{1}{N^3}\right)} \{0, \rho\}},\ h^F_{N, \epsilon}:=h_{N}^Fe^{-\epsilon|t|^2-\epsilon|z|^2},$$
           where $\max\nolimits_{(-,-)}$ is the regularized max function defined as in Lemma \ref{lem: Regularization max functions}.
           It is clear that $(F,h_{N,\epsilon}^F)$ is Nakano strictly positive on some neighborhood of the closure of $\tilde{\Omega}$.\\
            	\indent  Let $E$ be the trivial vector bundle over $U$ as in Lemma \ref{lem:key lemma}.
            For any $t\in U,\ f,g\in E_t$, set $$\left(h_N^E\right)_t(f,g):=\int_{\Omega_t}\sum_{\lambda,\mu=1}^rf_\lambda\overline{g_\mu} \left(h_N^F\right)_{(t,z)}(e_\lambda,e_\mu)d\lambda(z),$$
            then $h_N^E$ is a Hermitian metric on $E$. We may similarly define $h_{N,\epsilon}^E$ according to $h_{N,\epsilon}^F.$
            It is clear that  $E^k$ is a holomorphic subbundle of $E$ (the metric on $E^k$ is similar to that on $E$). Similar to the proof of \cite[Lemma 3.1]{DHQ}, by Lemma \ref{lem:key lemma}, we
            know for any sections $u_1, \cdots  , u_n$ of $E^k$, we have
			\begin{equation}\label{curvature inequality}
                \sum_{j,l}\left. h_{N,\epsilon}^{E}\left(\Theta_{jl}^{(E^k, h_{N,\epsilon}^E)} u_j, u_l\right)\right|_{t_0} \geq
                             \int_D \sum_{j,l,\beta,\gamma} H(h^F_{N,\epsilon})_{jl\beta\gamma}(t_0,z)u_{j\beta} \overline{u_{l\gamma}}d\lambda(z).
             \end{equation}
            In fact, since $D$ is a circular domain containing the origin, any $f\in \mathcal O(D)$ can be represented as  a series
            	$$f=\sum_{j=0}^{+\infty}f_j$$
            that is convergent locally uniformly on $D$, where each $f_j$ is a homogenous polynomial of degree $j$. For any $\mathbb{S}^1$-invariant continuous bounded function $\psi$ on $D$, and any homogenous polynomials $g_j, g_l$ of degree $j$ and $l$ respectively,
            	we have
            	$$\int_Dg_j\bar g_l \psi d\lambda=0,\ \forall j\neq l.$$
            	It follows that, for any $t\in U$, an element $f$ in the orthogonal complement $(E^k)_t^{\bot}$ of $E_t^k$ in $E_t$ has the form
            	$$f=\sum_{j\geq 0, j\neq k}f_k,$$
            	where each $f_j$ is a homogeneous polynomial of degree $j$.
            	Hence $(E^k)_t^{\bot}$ as a vector space is independent of the choice of the hermitian metric $h^F$ and is also a holomorphic subbundle of $E$.
            	By Lemma \ref{lem:key lemma} and Lemma \ref{the direct sum of holomorphic vector bundles}, we know Inequality (\ref{curvature inequality}) holds.\\
            	\indent  For any $N\in \mathbb{N},\ s>\frac{1}{N^2}$, set
			$$d(z,\Omega_{t_0}):=\inf_{w\in\Omega_{t_0}}|z-w|,\ \Omega_{t_0,s}^N:=\{z\in\mc^m|\ \frac{1}{N^2}<d(z,\Omega_{t_0})<s\},$$
			Fix an $s$ such that $\Omega_{t_0,s}^N\subset\subset D$ for all $N>>1$.
			It is clear that
            $$\max\nolimits_{(\frac{1}{N^3},\frac{1}{N^3})}\{0,\rho\}=\rho \text{ on }\Omega^N_{t_0,s}$$
            for all $N>>1$, then we have
			\begin{equation}\label{curvature inequality2}
            \int_{\Omega_{t_0,s}}N\sum_{j,l,\beta,\gamma}H(h_{N,\epsilon}^F)_{jl\beta\gamma}u_{j\beta}\overline{u_{l\gamma}}d\lambda\geq
                        \int_{\Omega_{t_0,s}}\frac{N}{2}\sum_{j,l,\beta}H(\rho)_{jl}u_{j\beta}\overline{u_{l\beta}}e^{-N\rho}d\lambda
            \end{equation}
			on $\Omega^N_{t_0,s}$ for all $N>>1$,  where $H(\rho)_{jl}=\rho_{jl}-\sum_{p,q}\rho^{pq}\rho_{jq}\rho_{pl}$. The remaining is similar to the proof of Theorem 4.2 of \cite{DHQ}.
           We will first prove $i\Theta^{(E^k,h_N^E)}\geq \theta$ in the sense of Nakano for some $\theta$, then by Lemma \ref{lem: optimal L2-estimate}, we get $i \Theta_{(E^k, h)} \geq\theta$
           whenever $N\rw \infty$, which will complete the proof. The details is stated as follows.\\
             \indent By the assumption and the tube neighborhood theorem, it is clear that there are  constants $\delta_0, \delta_1>0$ such that (in the following, whenever we say a number is a constant,
             we mean that it is a number that does not depend on the sections of $E$ and $N,\epsilon,s$)
            	\begin{align*}
            		&\int_{\Omega_{t_0,s}}N\sum_{j,l,\beta}H(\rho)_{jl}u_{j\beta}\overline{u_{l\beta}}e^{-N\rho}d\lambda\geq \delta_0\int_{\Omega_{t_0,s}}N\sum_{j,\beta}|u_{j\beta}|^2 e^{-N\rho}d\lambda\\
            		\geq & \delta_1 \int_{\zeta\in\partial \Omega_{t_0}} dS(\zeta)\int^s_{1/N^2} N\sum_{j,\beta}|u_{j\beta}(\zeta+\tau \mathbf{n}_\zeta)|^2 e^{-N\rho(\zeta+\tau \mathbf{n}_\zeta)}d\tau\\
            		\geq & \delta_1 \sum_{j,\beta}\int_{\zeta\in\partial \Omega_{t_0}} dS(\zeta) \inf_{1/N^2\leq \tau\leq s} |u_{j\beta}(\zeta+\tau \mathbf{n}_\zeta)|^2 \int^s_{1/N^2}  N  e^{-N\rho(\zeta+\tau \mathbf{n}_\zeta)}d\tau\\
            		\geq & \delta_1 \sum_{j,\beta}\int_{\zeta\in\partial \Omega_{t_0}} dS(\zeta) \inf_{0\leq \tau\leq s} |u_{j\beta}(\zeta+\tau \mathbf{n}_\zeta)|^2 \int^s_{1/N^2} N  e^{-NT\tau}d\tau,
            	\end{align*}
            	where $\mathbf{n}_\zeta$ is the unit outward normal vector of $\partial\Omega_{t_0}$ at $\zeta$,  and $T>0$ is a constant such that $\rho(\zeta+\tau \mathbf{n}_\zeta)\leq T\tau$ for all $\zeta\in\partial\Omega_{t_0}$ and $0\leq\tau\leq s$.
            	Since
             $$\lim_{N\ra\infty}\int^s_{1/N^2} N  e^{-NT\tau}d\tau=\frac{1}{T}>0,$$
             then  for $N$ sufficiently large, we have
            	\begin{equation}\label{curvature inequality3}
            	\int^s_{1/N^2} N e^{-NT\tau}d\tau\geq \frac{1}{2T}.
            	\end{equation}
            Combining Inequalities (\ref{curvature inequality}), (\ref{curvature inequality2}) and (\ref{curvature inequality3}),
            we know there is a constant $\delta_4>0$ such that, for any sections $u_1, \cdots  , u_n$ of $E^k$, we have
			\begin{equation}\label{curvature inequality}
                \sum_{j,l}\left. h_{N,\epsilon}^{E}\left(\Theta_{jl}^{(E^k, h_{N,\epsilon}^E)} u_j, u_l\right)\right|_{t_0} \geq
                             \delta_4\sum_{j,\beta}\int_{\partial \Omega_{t_0}} \inf_{0\leq \tau\leq s} |u_{j\beta}(\zeta+\tau \mathbf{n}_\zeta)|^2dS(\zeta).
             \end{equation}
            	Let $\epsilon\ra 0+$, we get
            	\begin{align}\label{eq:curvature estimate vir boundary}
            		\sum_{j,l}\left. h^{E}_N\left(\Theta_{jl}^{(E^k, h^E_{N})} u_j, u_l\right)\right|_{t_0}
            		\geq \delta_4\sum_{j,\beta}\int_{\partial\Omega_{t_0}} \inf_{0\leq \tau\leq s} |u_{j\beta}(\zeta+\tau \mathbf{n}_\zeta)|^2 dS
            	\end{align}
            	for $N$ sufficiently large.\\
                \indent We claim that there is a constant $\delta_5>0$ such that, for any sections $u_1, \cdots  , u_n$ of $E^k$, we have
                \begin{equation}\label{curvature inequality4}
                \sum_{j,\beta}\int_{\partial\Omega_{t_0}} \inf_{0\leq \tau\leq s} |u_{j\beta}(\zeta+\tau \mathbf{n}_\zeta)|^2 dS\geq
                  \delta_5\sum_jh^E(u_j,u_j).
                \end{equation}
                Without loss of generality, we only need to sections $u_1, \cdots  , u_n$ of $E^k$ such that
                $$\sum_{j,\beta}\int_{\Omega_{t_0}}|u_{j\beta}|^2d\lambda=1.$$        	
            	Since each $u_{j\beta}$ is a homogenous polynomial of degree $k$ and $\Omega_{t_0}$ contains the origin,
            	we may choose a constant $M_1>0$ and a large ball $B$ with $\overline D\subset B$ such that
            	$\sum_{j,\beta}\int_B|u_{j\beta}|^2d\lambda(z)\leq M_1$. By Cauchy's Inequality for holomorphic functions, there is a constant $M_2>0$ such that
            	$\sum_j|du_j^2|\leq M_2$ on $D$, then it is obvious that
            	\begin{equation}\label{eq:curvature estimate via b}
            		\sum_{j,\beta}\inf_{0\leq \tau\leq s} |u_{j\beta}(\zeta+\tau \mathbf{n}_\zeta)|^2\geq \sum_{j,\beta}|u_{j\beta}(\zeta)|^2-nrsM_2
            	\end{equation}
            	for all $\zeta\in\partial\Omega_{t_0}$.
                By  Corollary 1.7 of \cite{DJQ}, there is a constant $\delta_6>0$ such that
                $$\sum_{j,\beta}\int_{\partial\Omega_{t_0}}|u_{j\beta}(\zeta)|^2dS\geq \delta_6\sum_{j,\beta}\int_{\Omega_{t_0}}|u_{j\beta}|^2d\lambda=\delta_6.$$
            	Choose $N>>1$ enough large and $s>0$ enough small, then by continuity, we know there exists a constant $\delta_7>0$ such that
                $$\sum_{j,\beta}\int_{\Omega_{t_0}}|u_{j\beta}|^2d\lambda-nrs M_2\geq \delta_7\sum_jh^E(u_j,u_j),$$
                which completes the proof of Inequality  (\ref{curvature inequality4}).  \\
            	\indent By Inequality (\ref{eq:curvature estimate vir boundary}) and Inequality
            (\ref{eq:curvature estimate via b}), there is a constant $\delta_8>0$ such that for $N$ sufficiently large and all sections
            $u_1, \cdots  , u_n$ of $E^k$, we have
            \begin{equation}
            		\sum_{j,l}\left. h^{E}_N\left(\Theta_{jl}^{(E^k, h^E_{N})} u_j, u_l\right)\right|_{t_0}
            		\geq \delta_8\sum_{j}h^E(u_j,u_j).
             \end{equation}
             In another word, if we take (where we will shrink $U$ if necessary to gurantee $\delta_8$ is a continuous function of $t_0$)
            	$$\theta:=i \delta_8 \sum_j  dt_j \wedge d\bar{t}_j \otimes \operatorname{Id}_{E^k} \in C^0 (U, \wedge^{1,1}T^*_U \otimes \operatorname{End}(E^k)),$$
            	then we have $i \Theta^{(E^k, h^E_N)} \geq\theta$ in the sense of Nakano.\\
            	\indent Now we prove $i \Theta^{(E^k, h^E)} \geq\theta$ in the sense of Nakano. Let $\psi$ be a strictly plurisubharmonic function on $U$, and $f \in C_c^{\infty} (U, \wedge^{n,1}T^*_U \otimes E^k)$ satisfies $\bar{\partial}f=0$ and
            	$$\int_Uh^E\left(B_{i\partial\bar\partial\psi, \theta}^{-1}f, f\right)e^{-\psi} d\lambda< +\infty,$$
            	where  $B_{i\partial\bar\partial\psi, \theta}$ is given as in Lemma \ref{lem: optimal L2-estimate}.	Then there exists a constant $M_3>0$ such that
            	$$\int_U h_N^E\left(B_{i\partial\bar\partial\psi, \theta}^{-1} f, f\right)e^{-\psi} d\lambda\leq M_3,\ \forall N>>1.$$
            	By Lemma \ref{lem: L2 estimate Nakano}, for all $N>>1$, there are measurable sections $u_N$ of  $\wedge^{n,0}T^*_U \otimes E^k$ on $U$, such that $\bar{\partial}u_N=f$ and
            	$$\int_U h_N^E\left(u_N,u_N\right)e^{-\psi} d\lambda_t \leq \int_Uh_N^E\left(B_{i\partial\bar\partial\psi, \theta}^{-1}f, f\right)e^{-\psi} d\lambda_t \leq M_3.$$
            	Since $h^E_N=h^E$ on $\Omega$, we have
            	$$\int_U h^E(u_N,u_N)e^{-\psi} d\lambda\leq \int_U h_N^E\left(u_N,u_N\right)e^{-\psi}d\lambda\leq M_3,\ \forall N>>1.$$
            	Since every bounded sequence in a Hilbert space has a weakly convergent subsequence, by taking a subsequence if necessary,
            we may assume $u_N$ converges weakly to $u$, which is a measurable section of $\wedge^{n,0}T^*_U \otimes E^k$. Moreover we have  $\bar\partial u=f$ in the sense of distribution and
            $$\int_Uh^E(u,u)e^{-\psi} d\lambda\leq\liminf_{N\ra\infty}\int_U h^E(u_N,u_N)e^{-\psi} d\lambda\leq M_3.$$
            By Lebesgue's Dominated Convergence Theorem, we know
            	$$\lim_{N\ra\infty}\int_U h_N^E\left(B_{i\partial\bar\partial\psi, \theta}^{-1} f, f\right)e^{-\psi} d\lambda=
            	\int_U h^E\left(B_{i\partial\bar\partial\psi, \theta}^{-1}f, f\right)e^{-\psi} d\lambda,$$
            	so we get
            	$$\int_Uh^E(u,u)e^{-\psi} d\lambda\leq\int_U h^E\left(B_{i\partial\bar\partial\psi, \theta}^{-1}f, f\right)e^{-\psi} d\lambda.$$
            	It follows from Lemma \ref{lem: optimal L2-estimate} that $i \Theta^{(E^k, h)} \geq \theta$ in the sense of Nakano.
            	In particular, the curvature of $(E^k,h)$ is strictly positive in the sense of Nakano.
            \end{proof}
            
            \begin{rem}
            Assume that $D\subset\mc^m$ is a bounded circular domain containing the origin and $\varphi$ is an $\mathbb{S}^1$-invariant continuous function on $\overline D$.
Let $A^2(D,\varphi)$ be the space of $L^2$ holomorphic functions on $D$ with weight $e^{-\varphi}$.
Then ${\rm P}^k\subset A^2(D,\varphi)$ for $k\geq 0$, and ${\rm P}^k\bot {\rm P}^l$ provided that $k\neq l$.
Moreover, we have the decomposition
$$A^2(D,\varphi)=\overline{\mathop{\bigoplus}_k} {\rm P}^k,$$
where $\overline\bigoplus$ means taking orthogonal direct sum first and then taking closure.
In terms of representation theory, $\mathbb{S}^1$ has a natural unitary representation on $A^2(D,\varphi)$
and $\rm P^k$ is the submodule corresponding to the irreducible character $\alpha\mapsto\alpha^k$ of $\mathbb{S}^1$.
For Reinhardt domains, we have similar decompositions by considering representation of the torus group $\mathbb T^m$.
In this meaning, Theorem \ref{thm(extend):curvartue positive circular domains} and Theorem \ref{qin:application} can be generalized to any compact group actions.
The case of the trivial group action (without group action) is carried out in detail by the last author \cite{qin}.
            \end{rem}

            \section{The proof of Theorem \ref{qin:convex}}
            In this section, we give the proof of Theorem \ref{qin:convex}.
            Parallel to Definition \ref{def:strict p.s.c family}, we give the following
            
\begin{defn}\label{def:strict convex family}
			Let $U\subset\mr^n$ be a domain, and let $p\colon \mr^n\times\mr^m\rw \mr^n$ be the natural projection.\bi
			\item[(1)] A \emph{family of bounded domains} (of dimension $m$ over $U$) is a domain $D\subset U\times\mr^m$ such that $p(D)=U$
             and  all fibers $D_t:=p^{-1}(t)(t\in U)$ are bounded.
			\item[(2)] A family of bounded domains $D$ over $U$ has\emph{ $C^2$  boundary }if  there exists a $C^2$-function $\rho$
			defined on $U\times\mr^m$ such that $D=\{(t,x)\in U\times\mr^m|\ \rho(t,x)<0\}$ and $d(\rho|_{D_t})\neq 0$
            on $\partial D_t$ for all $t\in U$. Such a function $\rho$ is  called a \emph{defining function} of $D$.
			\item[(3)] A family of bounded domains $D\subset U\times \mc^m$ is called \emph{strictly convex} if it admits a $C^2$-boundary defining function that is
            strictly convex on some neighborhood of $\overline{D}$.
			\ei
		\end{defn}

The proof of Theorem \ref{qin:convex} is as follows.
           \begin{proof}
            Consider the map
            $$\phi\colon \mc_z^m\rw (\mc_w^*)^m,\ (z_1,\cdots,z_m)\mapsto (e^{z_1},\cdots,e^{z_m}).$$
            Similar as in the proof of Theorem \ref{thm:Ber}, we may get a measure on $(\mc_w^*)^n$ via $\phi$ which satisfies
            $$
            d\mu(w)=\frac{1}{|w_1|^2\cdots|w_n|^2}d\lambda(w),\ \forall w\in (\mc^*)^n.
            $$
            Set $U:=U_0+i\mr^n\subset \mc_\tau^n$. For any $\tau\in U$, set
            $$\Omega_\tau:=D_{\re(\tau)}+i\mr^m\subset \mc_z^m,\ (\Omega')_\tau:=\phi(D_{\re(\tau)}+i \mr^m)\subset (\mc_w^*)^m,$$
            then both $\Omega:=\cup_{\tau\in U}\Omega_\tau$ and $\Omega':=\cup_{\tau\in U}(\Omega')_\tau$ are strictly pseudoconvex family of domains over $U$.
            It is clear that each $\Omega_\tau$ is a (connected) Reinhardt domain for each $\tau\in U$. We extend $g^E$  to a Hermitian metric $h^{E'}$  on the trivial vector bundle $E':=U\times\mc^r$ over $U$
            such that $h^{E'}$ is independent of $\im (\tau)$ for any $\tau\in U$. We also extend $g^F$ to a Hermitian metric $h'$ on the trivial vector bundle $F':=\Omega\times \mc^r$ over $\Omega$
            such that $h'$ is independent of $\im(\tau),\ \im(z)$ for any $(\tau,z)\in\Omega.$
             $h''$ clearly induces a Hermitian metric $h''$  on the trivial vector bundle $F'':=\Omega'\times \mc^r$. It is obvious that $(F'',h'')$ is Nakano positive on  some neighborhood of the closure of $\Omega'$ by assumption.
             Let
             $$h_{(\tau,w)}:=\frac{h''_{(\tau,w)}}{|w_1|^2\cdots |w_m|^2}
              \text{ for any }w\in \Omega_\tau \text{ and any }\tau\in U,$$
            then $(F'',h)$ is again Nakano positive on some neighborhood of the closure of $ \Omega'$, and $h_{(\tau,w)}$ is $\mathbb{T}^n$-invariant with respect to $w$ for any $w\in \Omega_\tau$ and any $\tau\in U$. Let $\{e_1'',\cdots,e_r''\}$ be the canonical holomorphic frame of $F''$.\\
            \indent For any $\tau\in U$, by Lemma \ref{measure:1}, we have
             $$h_\tau^{E'}(u,v)=\frac{1}{(2\pi)^m}\int_{(\Omega')_\tau}\sum_{\lambda,\mu=1}^ru_\lambda \overline{v_\mu}h_{(\tau,w)}(e_\lambda'',e_\mu'')d\lambda(w),\ \forall u,v\in (E')_\tau.$$
            By Theorem \ref{qin:application}, we know $(E',h^{E'})$ is Nakano strictly positive on $U$. Therefore, $(E,g^E)$ is also strictly positive in the sense of Nakano.
           \end{proof}

	\end{document}